\theoremstyle{plain}
\newtheorem{theorem}{Theorem}[section]
\newtheorem{corollary}[theorem]{Corollary}
\newtheorem{lemma}[theorem]{Lemma}
\newtheorem{hypothesis}[theorem]{Hypothesis}
\newtheorem{definition}[theorem]{Definition}
\theoremstyle{remark}
\newtheorem{remark}[theorem]{Remark}
\newcommand{\PG}{\textnormal{PG}}
\renewcommand\le{\leqslant}
\renewcommand\ge{\geqslant}
\title[Point-primitive generalised hexagons and octagons]{Point-primitive generalised hexagons and octagons}
\author[J.~Bamberg et al.]{John Bamberg, S. P. Glasby$^*$, Tomasz Popiel, Cheryl E. Praeger$^\dag$}
\address{
Centre for the Mathematics of Symmetry and Computation\\
School of Mathematics and Statistics\\
The University of Western Australia\\
35 Stirling Highway, Crawley, WA 6009, Australia.
\newline $^*$Also affiliated with The
Department of Mathematics, University of Canberra, Australia. 
\newline $^\dag$ Also affiliated with King Abdulaziz University, Jeddah, Saudi Arabia.
}
\email{\{john.bamberg,stephen.glasby,tomasz.popiel,cheryl.praeger\}@uwa.edu.au}
\thanks{The first author acknowledges the support of the Australian Research Council (ARC) Future Fellowship FT120100036. 
The second and third authors acknowledge the support of the ARC Discovery Grants DP130100106 and DP1401000416, respectively. 
The research reported in the paper forms part of the ARC Discovery Grant DP1401000416 of the second and fourth authors. 
The fifth author is grateful to the Centre for the Mathematics of Symmetry and Computation (UWA) for its hospitality during his visit in July of 2014, and acknowledges the support of the ARC Discovery Grant DP1401000416 for funding this visit. 
His work was also supported by the research projects 302660/2013-5 (CNPq, Produtividade em Pesquisa), 475399/2013-7 (CNPq, Universal), and APQ-00452-13 (Fapemig, Universal).}
\author[]{Csaba Schneider}
\address{
Departamento de Matem\'{a}tica, 
Instituto de Ci\^{e}ncias Exatas, 
Universidade Federal de Minas Gerais, 
Av.~Ant\^{o}nio Carlos, 6627, 31270-901,
Belo Horizonte, MG, Brazil. 
}
\email{csaba@mat.ufmg.br}
\subjclass[2010]{primary 51E12; secondary 20B15, 05B25}
\keywords{generalised hexagon, generalised octagon, generalised polygon, primitive permutation group}
\begin{document}

\begin{abstract} 
In 2008, Schneider and Van Maldeghem proved that if a group acts flag-transitively, point-primitively, and line-primitively on a generalised hexagon or generalised octagon, then it is an almost simple group of Lie type. 
We show that point-primitivity is sufficient for the same conclusion, regardless of the action on lines or flags. 
This result narrows the search for generalised hexagons or octagons with point- or line-primitive collineation groups beyond the classical examples, namely the two generalised hexagons and one generalised octagon admitting the Lie type groups $\mathsf{G}_2(q)$, $\,^3\mathsf{D}_4(q)$, and $\,^2\mathsf{F}_4(q)$, respectively. 
\end{abstract}

\maketitle

\section{Introduction}\label{section:introduction}

Generalised polygons were introduced by Tits~\cite{Tits:1959cl} in an attempt to find geometric models for simple groups of Lie type. 
In particular, the group $\mathsf{PSL}(3,q)$ is admitted by the Desarguesian projective plane $\PG(2,q)$; the groups $\mathsf{PSp}(4,q)$, $\mathsf{PSU}(4,q)$, $\mathsf{PSU}(5,q)$ are admitted by certain generalised quadrangles; and $\mathsf{G}_2(q)$, $\,^3\mathsf{D}_4(q)$, $\,^2\mathsf{F}_4(q)$ arise as automorphism groups of two generalised hexagons and a generalised octagon, respectively (up to point--line duality).
These generalised polygons are called the \emph{classical} generalised polygons~\cite[Chapter~2]{Van-Maldeghem:1998sj}, and they serve as the examples that have the greatest degree of symmetry: their automorphism groups act primitively and distance-transitively on both points and lines. 
Buekenhout and Van Maldeghem~\cite{Buekenhout:1994zp} showed that the stronger of these symmetry conditions, distance-transitivity on points and lines, characterises the classical generalised polygons. 
However, it is not yet known whether there are non-classical generalised polygons having an automorphism group acting primitively on points or on lines. 
We make progress towards resolving this question by showing that if a generalised hexagon or generalised octagon admits an automorphism group $G$ that acts primitively on points, then $G$ must be an almost simple group of Lie type, regardless of its action on lines.

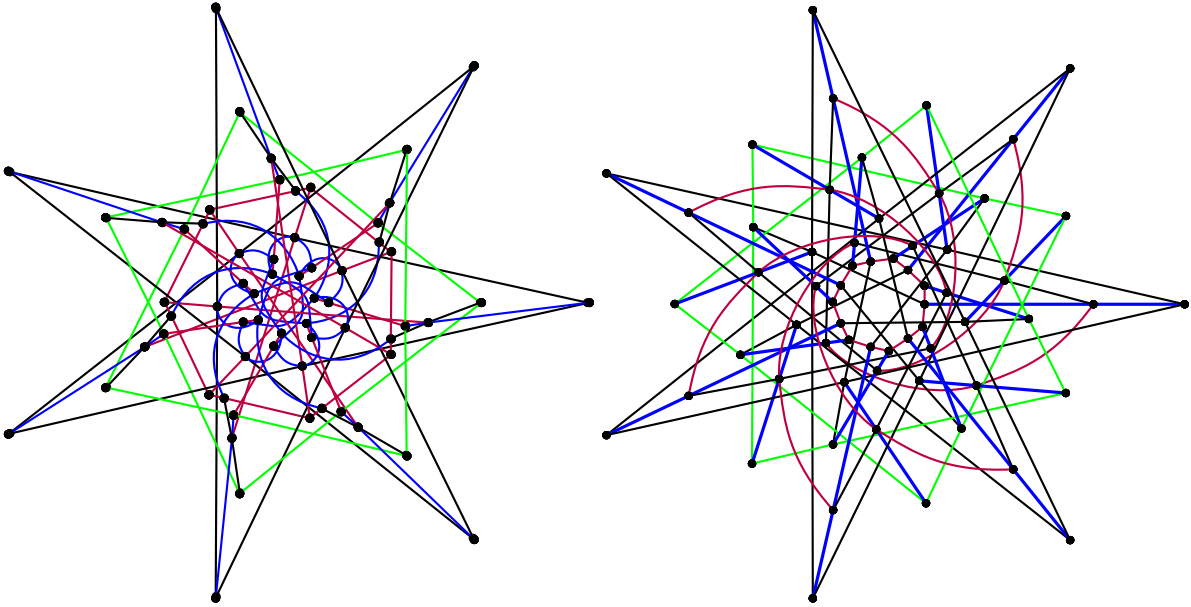
\begin{figure}[!ht]
\caption{\Small The two generalised hexagons of order $(2,2)$. 
Each is the point--line dual of the other. 
There are $(2+1)(2^4+2^2+1)=63$ points and lines, and each point (respectively line) is incident with exactly $2+1=3$ lines (respectively points). 
The Dickson group $\textup{\sf G}_2(2)$ acts primitively and distance-transitively on both points and lines. 
On the right-hand side, the blue (thick) lines form a {\em spread}, that is, every point is incident with a unique blue line. 
These pictures were inspired by a paper of Schroth~\cite{MR1675918}.}
\label{F:GH}\vskip5mm
\begin{center}
    \begin{tikzpicture}[scale=0.047]
      \foreach\n in {0, 1,..., 6}{
      \begin{scope}[rotate=\n*51.4286]  
        \coordinate (a0) at (85,0);
        \coordinate (b0) at (55,0);
        \coordinate (c0) at (12.5,0);
        \coordinate (d0) at (8.5,1.3);
        \coordinate (e0) at (16.2,9);
        \coordinate (f0) at (30,14.3);
        \coordinate (g0) at (26.6,17.0);
        \coordinate (h0) at (26.3,22.4);
        \coordinate (i0) at (29.5,28);
        \foreach\k in {1, 2,..., 6}{
          \foreach\p in {a,b,c,d,e,f,g,h,i}{
            \coordinate (\p\k) at ($ (0,0)!1! \k*51.4286:(\p0) $);
          }
        }
        \draw[thick,black] (a0)--(e1)--(a3);    
        \draw[thick,green] (b0)--(h0)--(b2);    
        \draw[thick,purple] (f0)--(g0)--(f1);   
        \draw[thick,blue] (h0)--(i0)--(a1);     
        \draw[thick,purple] (h6)--(c0)--(d0);   
        \draw[thick,purple] (f0)--(e0)--(i3);   
        \draw[thick,black] (b0)--(i6)--(g6);    
        \draw[thick,color=blue] (g0) .. controls (27,2) and (17,-5)
           .. (d6) .. controls (-7,-5) and (-5,-5) .. (c3); 
        \draw[thick,color=blue] (c0) .. controls (16,3) and (17,5)
           .. (e0) .. controls (15,13) and (8,15) .. (d1);  
        \foreach\k in {0,1,..., 6}{
          \foreach\p in {a,b,c,d,e,f,g,h,i}{
            \draw[fill] (\p\k) circle [radius=1.1];
          }
        }
      \end{scope}
      }
    \end{tikzpicture}
    \begin{tikzpicture}[scale=0.4]
      \foreach\n in {0, 1,..., 6}{
      \begin{scope}[rotate=\n*51.4286]  
        \coordinate (a0) at (10,0);
        \coordinate (b0) at (7,0); 
        \coordinate (c0) at (1.45,0); 
        \coordinate (d0) at (4.878,-0.4878);
        \coordinate (e0) at (2.1729,0.37976); 
        \coordinate (f0) at (1.45,0.612); 
        \coordinate (g0) at (2.78,-0.585);
        \coordinate (h0) at (4.074,0.7846); 
        \coordinate (i0) at (6.0976,2.9268);
        \foreach\k in {1, 2,..., 6}{
          \foreach\p in {a,b,c,d,e,f,g,h,i}{
            \coordinate (\p\k) at ($ (0,0)!1! \k*51.4286:(\p0) $);
          }
        }
        \draw[very thick,blue] (a0)--(b0)--(c0);    
        \draw[very thick,blue] (d0)--(e0)--(f0);    
        \draw[very thick,blue] (g0)--(h0)--(i0);    
        \draw[thick,black] (a0)--(g1)--(a3);   
        \draw[thick,green] (i0)--(d1)--(i2);   
        \draw[thick,black] (c0)--(g3)--(d3);   
        \draw[thick,color=purple] (b0) .. controls (5.7,-1.8) and (4.6,-2.2)
           .. (h6) .. controls (1.5,-3.2) and (-1,-2.4) .. (e4); 
        \draw[thick,] (b0)--(h0)--(e2);   
        \draw[thick,purple] (f6)--(c0)--(f0);  
        \foreach\k in {0,1,..., 6}{
          \foreach\p in {a,b,c,d,e,f,g,h,i}{
            \draw[fill] (\p\k) circle [radius=0.12];
          }
        }
     \end{scope}
     }
    \end{tikzpicture}
    \end{center}
\end{figure}

A {\em generalised $d$-gon} is a point--line geometry whose bipartite incidence graph has diameter $d$ and girth $2d$. 
An {\em automorphism} (or {\em collineation}) of a generalised $d$-gon is a permutation of the point set, together with a permutation of the line set, such that incidence is preserved. 
To exclude trivial cases, we require that the geometry is \emph{thick}, namely that each line contains at least three points and each point lies on at least three lines. 
In this case, there are constants $s\ge 2$ and $t\ge 2$ such that each line contains exactly $s+1$ points and each point lies on exactly $t+1$ lines, and $(s,t)$ is called the {\em order} of the generalised $d$-gon (see~\cite[Corollary~1.5.3]{Van-Maldeghem:1998sj}). 
For illustration, the two generalised $6$-gons of order $(2,2)$ are shown in Figure~\ref{F:GH}. 
The celebrated theorem of Feit and Higman~\cite{Feit:1964os} shows that a thick generalised $d$-gon can only exist when $d \in \{2,3,4,6,8\}$, and as generalised $2$-gons are simply geometries whose incidence graphs are complete bipartite, they can also be regarded as trivial. 
This leaves four distinct types of thick generalised polygon. 
A generalised $3$-gon is precisely a projective plane, and it has long been conjectured (see Dembowski~\cite[p.~208]{Dembowski:1997sy}) that the mild condition of transitivity on the set of points characterises the classical projective plane $\PG(2,q)$. 
Currently the best result is to due to Gill~\cite{Gill:2007xw}, who proved that all minimal normal subgroups of a group $G$ acting transitively on a non-classical projective plane are elementary abelian. 
Moreover, Gill~\cite{Gill:ft} proved that the Sylow $2$-subgroups of $G$ are cyclic or generalised quaternion, 
and, in particular, that the only possible insoluble composition factor is $A_5$. 
Kantor~\cite{Kantor:1987qq} showed that a group acting primitively on a non-classical projective plane contains a cyclic normal subgroup of prime order acting regularly on the points. 
This leads to severe number-theoretic restrictions on the possible size of the projective plane, and Thas and Zagier~\cite{Thas:2008px} have shown that these restrictions are not satisfied for any non-classical projective plane with fewer than $4 \times 10^{22}$ points. 

Both Kantor's results, and the characterisation by Buekenhout and Van Maldeghem, rely heavily on fundamental results regarding the structure of primitive permutation groups. 
While Buekenhout and Van Maldeghem show that primitivity is a consequence of distance-transitivity, it is not necessarily the case that a flag-transitive group of automorphisms of a generalised polygon with $d \ge 4$ is primitive on points and lines: the generalised quadrangles ($4$-gons) arising from transitive hyperovals in $\PG(2,4)$ and $\PG(2,16)$ admit flag-transitive, point-primitive, but line-imprimitive automorphism groups. 
Bamberg et~al.~\cite{Bamberg:2012yf} proved that if $G$ is a group of automorphisms of a finite thick generalised quadrangle acting primitively on both points and lines, then $G$ is almost simple. 
Moreover, if $G$ is also flag-transitive then $G$ is almost simple of Lie type. 
Schneider and Van Maldeghem~\cite{Schneider:2008lr} had previously proved the following result for generalised hexagons ($6$-gons) and octagons ($8$-gons).

\begin{theorem}[Schneider and Van Maldeghem~{\cite[Theorem~2.1]{Schneider:2008lr}}]
Let $\mathcal{S}$ be a finite thick generalised hexagon or octagon. 
If $G \le \operatorname{Aut}(\mathcal{S})$ acts flag-transitively, point-primitively, and line-primitively, then $G$ is an almost simple group of Lie type. 
\end{theorem}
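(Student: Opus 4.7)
The plan is to apply the O'Nan--Scott theorem to the point-primitive action of $G$ and eliminate every type of primitive permutation group except the almost simple case with socle of Lie type. Throughout, let $v$ denote the number of points, so $v=(s+1)(s^2t^2+st+1)$ for hexagons and $v=(s+1)(s^3t^3+s^2t^2+st+1)$ for octagons. The Feit--Higman theorem forces $st$ to be a perfect square (hexagons) or $2st$ to be a perfect square (octagons), and the Haemers--Roos bounds give $s\le t^3$ and $t\le s^3$. Flag-transitivity implies that the point-stabiliser $G_\alpha$ acts transitively on the $t+1$ lines through $\alpha$, so $t+1$ divides $|G_\alpha|$; combined with line-primitivity, the dual divisibility holds as well.

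To rule out the affine (HA) type, one observes that the socle is elementary abelian of order $v=p^n$ acting regularly. Reducing $s^2t^2+st+1$ modulo $s+1$ shows that $p$ divides $t^2-t+1$, and similarly for the octagon expression. Combined with the Feit--Higman divisibility requirements and the Haemers--Roos inequalities, only a short list of candidate parameter pairs $(s,t)$ survives, each of which can be eliminated by hand. The types SD, CD, PA, TW, and HS all have socle $T^k$ with $T$ non-abelian simple and $k\ge 2$. In each such case, $v$ must admit a rigid factorisation (a proper $k$-th power in the PA case, or divisible by $|T|^{k-1}\,|\mathrm{Out}(T)|$ in diagonal types), which is incompatible with the two-factor form of $v$ under the Haemers--Roos bound. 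For PA, the additional point-line structure forced by product action is ruled out by the girth condition defining a generalised polygon.

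It remains to handle the almost simple case. If the socle $T$ is alternating $A_n$ ($n\ge 5$) or sporadic, then $T_\alpha$ is a maximal subgroup of $T$ of index equal to one of the allowed values of $v$. Using the classical description of maximal subgroups of $A_n$ and the ATLAS for sporadic groups, together with the divisibility conditions $(t+1)\mid|G_\alpha|$ and $(s+1)\mid|G_\beta|$ (for $\beta$ a line), one compiles a finite list of numerical candidates and checks that none yields a valid hexagon or octagon.

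The main obstacle will be the almost simple step: the maximal subgroup structure of alternating and sporadic groups is extensive, and one must systematically match each index $|T{:}T_\alpha|$ against the hexagon/octagon identities and the Feit--Higman constraints. The HA step for octagons also requires care, since the parameters there are very restricted but not immediately incompatible with $v$ being a prime power; the remaining O'Nan--Scott types succumb to cleaner arithmetic on the factorisation of $v$.
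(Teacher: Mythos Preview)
The statement you are attempting to prove is actually cited in this paper as a prior result of Schneider and Van Maldeghem, so the paper does not supply its own proof of it; however, the paper does prove the strictly stronger Theorem~\ref{thm:main} (point-primitivity alone suffices), and both that proof and the original Schneider--Van Maldeghem argument proceed very differently from your outline.  The comparison is worth making because your sketch has real gaps.

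Your treatment of the non-AS O'Nan--Scott types is almost entirely arithmetic, and this is where the argument breaks down.  For type HA you assert that the congruences coming from $v=p^n$ leave only a ``short list'' of $(s,t)$, but you give no mechanism that makes the list finite; knowing that $p\mid s+1$ and $p\mid t^2-t+1$ does not bound anything.  For PA you claim $v$ must be a proper $k$th power and that this clashes with the shape $(s+1)(s^2t^2+st+1)$ under Haemers--Roos, but no such incompatibility holds in general, and your fallback (``the additional point--line structure forced by product action is ruled out by the girth condition'') is not an argument.  The diagonal and twisted-wreath cases are handled by a similarly unproven claim about divisibility by $|T|^{k-1}|\mathrm{Out}(T)|$.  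You also omit type HC, and your grouping of HS with the $T^k$, $k\ge2$ cases is structurally off.  The actual route, both here and in \cite{Schneider:2008lr}, is geometric rather than arithmetic: HA, HS and HC fall to the single observation that the centraliser of a transitive subgroup of $\operatorname{Aut}(\mathcal S)$ is intransitive (Lemma~\ref{diamondOriginal}(iii)); PA and CD are eliminated by constructing, via the commuting-collinearity criteria in Lemma~\ref{diamondOriginal}(ii),(iv), an explicit element of $G_\lambda\setminus G_x$ after first proving $G_x=G_\lambda$; SD and TW fall to a semiregularity argument combined with Lemma~\ref{stNotCoprime}.

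The almost simple step has the same defect.  Your plan for $\operatorname{soc}(G)=A_n$ is to run through maximal subgroups and match indices against admissible $v$, but there are infinitely many $n$ and no finiteness is established by the divisibility conditions you list; the paper instead splits according to whether $G_x$ is intransitive, imprimitive, or primitive on $\{1,\dots,n\}$ and in each case derives a geometric contradiction (again via Lemma~\ref{diamondOriginal}) that is uniform in $n$.  The sporadic case is indeed a finite check, handled by Buekenhout and Van Maldeghem~\cite{Buekenhout:1993bf}.  In short, the O'Nan--Scott case split is the right skeleton, but the flesh should be incidence-geometric, not number-theoretic.
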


In this paper, we strengthen the result of Schneider and Van Maldeghem by showing that point-primitivity alone is sufficient for the same conclusion. 
That is, we prove the following theorem.

\begin{theorem} \label{thm:main}
Let $\mathcal{S}$ be a finite thick generalised hexagon or octagon. 
If $G \le \operatorname{Aut}(\mathcal{S})$ acts point-primitively, then $G$ is an almost simple group of Lie type. 
\end{theorem}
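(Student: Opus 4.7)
My plan is to apply the O'Nan--Scott theorem to the point-primitive group $G$ and rule out every type of finite primitive group other than almost simple with socle of Lie type. The arithmetic tools will be the Feit--Higman square conditions ($st$ a square for hexagons, $2st$ a square for octagons), the Higman inequalities ($s\le t^3$ and $t\le s^3$ for hexagons, $s\le t^2$ and $t\le s^2$ for octagons), and the explicit point-counts $v = (1+s)(1+st+s^2t^2)$ and $v = (1+s)(1+st)(1+s^2t^2)$ in the hexagon and octagon cases, respectively. In each O'Nan--Scott type, there is a characteristic constraint on the degree $v$ and on the structure of a point stabiliser $G_P$, which I will play off against these geometric numerical constraints.

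\textbf{Eliminating the affine case.} The most delicate step is the holomorph-of-elementary-abelian (HA) type, where $v = p^n$ for some prime $p$. Then both factors of $v$ above must be powers of the same prime. Writing $s+1 = p^a$ and reducing the second factor modulo $s+1$ shows that $p$ must divide $t^2-t+1$ in the hexagon case and $(t-1)(t^2-t+1)$ in the octagon case, so $p$ divides both $s+1$ and a small polynomial in $t$. Combining this divisibility with the Feit--Higman square condition and Higman's inequality should leave only a very short list of candidate pairs $(s,t)$, which can be handled individually --- either by a further $p$-adic contradiction, or by checking directly that the known classical generalised hexagons/octagons of that order have point-counts that are not prime powers. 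I expect this to be the main obstacle, because the valuations of the cyclotomic-type polynomials that appear as factors of $v$ require careful bookkeeping.

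\textbf{The remaining non-almost-simple types.} For the simple diagonal (SD), compound diagonal (CD), product action (PA), twisted wreath (TW), and holomorph-of-simple (HS, HC) types, the degree $v$ is constrained to have the form $m^k$ with $k\ge 2$, or $|T|^{k-1}$ for a non-abelian simple group $T$. Each such form is incompatible with the asymmetric factorisation of $v$ for all but a short list of small $(s,t)$, once Higman's inequality has been used to bound the parameters. The surviving candidates are eliminated either by matching against the known orders of non-abelian simple groups, or by adapting the analogous arguments used for generalised quadrangles by Bamberg et~al.~\cite{Bamberg:2012yf}, which in the product-action case exploit the incompatibility between a product decomposition of the point set and the rigid block structure imposed on a point orbit by the generalised polygon axioms.

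\textbf{Reduction to Lie type.} It remains to exclude alternating and sporadic socles in the almost simple case. For $T$ sporadic, the finite lists of maximal subgroups of $\operatorname{Aut}(T)$ available in the \textsc{Atlas} can be checked against the Feit--Higman and Higman constraints on $v = [G:G_P]$ with $G_P$ maximal. For $T \cong A_n$, the classification of maximal subgroups of $A_n$ into intransitive, imprimitive, and primitive families yields lower bounds on $v$ as a function of $n$ which, combined with Higman's inequality, preclude a compatible hexagon or octagon structure for all sufficiently large $n$; the small values of $n$ can then be dispatched by direct inspection. This leaves only the desired conclusion, that the socle of $G$ is a simple group of Lie type.
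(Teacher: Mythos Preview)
Your proposal is a strategy sketch rather than a proof, and several of the key steps either do not work as stated or are much harder than you suggest. I will point out the two places where the plan is most seriously off, and contrast with what the paper actually does.

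\textbf{The affine case is not the bottleneck.} You describe the HA type as ``the most delicate step'' and propose a $p$-adic analysis of the factorisation of $v$. In fact HA (and HS, HC) are disposed of in one line, with no arithmetic at all: in each of these types the centraliser in $\operatorname{Sym}(\mathcal P)$ of the socle $M$ is transitive on $\mathcal P$ (for HA, $M$ itself is abelian regular and self-centralising). But Lemma~\ref{diamondOriginal}(iii) says that no transitive subgroup of $\operatorname{Aut}(\mathcal S)$ can have a transitive centraliser. This is the argument of Schneider--Van Maldeghem \cite[Lemma~4.2(i)]{Schneider:2008lr}; your proposed arithmetic attack is unnecessary and would be substantially more work.

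\textbf{The arithmetic is insufficient for PA and CD.} This is the real gap. You claim that for PA/CD the degree $v=|\Gamma|^\ell$ with $\ell\ge 2$, and that being a perfect power is ``incompatible with the asymmetric factorisation of $v$ for all but a short list of small $(s,t)$''. No justification is given, and there is no reason to believe it: the factors $(1+s)$ and $(1+st+s^2t^2)$ (or $(1+st)(1+s^2t^2)$) share no obvious structural obstruction to their product being a square or higher power, and the Higman inequalities do not help here. The paper's proof of Lemma~\ref{lemma:PACD} is not arithmetic at all. It works inside the product structure $\mathcal P=\Gamma^\ell$: starting from the ``diagonal'' point $x=(\alpha,\ldots,\alpha)$, one manufactures a collinear point $y=(\beta,\alpha,\ldots,\alpha)$ differing in one coordinate, proves $G_x=G_\lambda$ for $\lambda=\langle x,y\rangle$, and then exhibits an explicit $g\in G_\lambda\setminus G_x$. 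Every step uses the commuting coordinate automorphisms together with the ``no triangles/no quadrangles'' lemmas (Lemma~\ref{diamondOriginal}(ii),(iv)). Nothing of this flavour appears in your plan.

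\textbf{SD, TW, and the alternating case.} Your treatment of SD/TW via divisibility of $v$ by $|T|$ is in the right spirit: the paper shows $4\mid v$, whence $\gcd(s,t)>1$ by Lemma~\ref{stNotCoprime}, and then finishes not with arithmetic but with a geometric semiregularity argument (Lemma~\ref{semiregular}) using Lemma~\ref{diamondOriginal}(i),(ii). For $\operatorname{soc}(G)=A_n$ your proposed ``lower bounds on $v$ as a function of $n$'' plus Higman does not obviously yield a contradiction for the intransitive and imprimitive stabiliser cases; the paper instead carries out a detailed case analysis on the combinatorics of $k$-subsets and partitions, again driven by Lemma~\ref{diamondOriginal}(iv) (forcing collinearity from fixed points), and only in the primitive-stabiliser subcase does a divisibility argument (Lemma~\ref{GxPrim}) enter. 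In short, the engine of the paper is geometric---the absence of short cycles in the incidence graph, exploited through Lemma~\ref{diamondOriginal}---and your purely arithmetic programme does not reach the cases that matter.
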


Theorem~\ref{thm:main} is proved by analysing the possible O'Nan--Scott types for $G$ considered as a primitive group on the point set of $\mathcal{S}$. 
We first prove that $G$ must be an almost simple group, that is, $G$ must have a unique minimal normal subgroup $T$, and $T$ must be a nonabelian simple group. 
It has been shown by Buekenhout and Van Maldeghem~~\cite{Buekenhout:1993bf} that $T$ cannot be a sporadic simple group, and we prove that $T$ is also not an alternating group. 
We note that our arguments do not use the Classification of Finite Simple Groups (CFSG), though the CFSG is used to conclude that $T$ must be a simple group of Lie type.

Combining Theorem~\ref{thm:main} with the previously mentioned results for projective planes and generalised quadrangles yields the following unified result for point- and line-primitive generalised polygons. 
The proof is summarised in Section~\ref{s:summary}, but we note here that the result of Bamberg et~al.~\cite{Bamberg:2012yf} for generalised quadrangles can be recast as below without the assumption of flag-transitivity. 
Recall also that the {\em socle} of a group $G$, denoted by $\operatorname{soc}(G)$, is the subgroup of $G$ generated by the minimal normal subgroups.

\begin{corollary} \label{mainCorollary}
Let $\mathcal{S}$ be a finite thick generalised $d$-gon, $d\ge 3$, of order $(s,t)$.  
If $G \le \operatorname{Aut}(\mathcal{S})$ acts point-primitively and line-primitively, then one of the following holds:
\begin{enumerate}[{\rm (i)}]
\item $G$ is almost simple of Lie type,
\item $d=3$ and $G$ is soluble, 
\item $d=4$, $G$ is almost simple with $\operatorname{soc}(G)\cong A_n$, $n\ge 5$, and $\gcd(s,t)=1$. 
\end{enumerate}
\end{corollary}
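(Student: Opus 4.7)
The plan is to combine Theorem~\ref{thm:main} with the corresponding results for projective planes and generalised quadrangles, handling each case allowed by Feit--Higman separately. Since $\mathcal{S}$ is thick, $d\in\{3,4,6,8\}$. For $d\in\{6,8\}$, Theorem~\ref{thm:main} applies directly: point-primitivity alone forces $G$ to be almost simple of Lie type, giving conclusion~(i), and the additional hypothesis of line-primitivity is not needed.

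For $d=4$, I would invoke the result of Bamberg et al.~\cite{Bamberg:2012yf}, recast without the flag-transitivity hypothesis. Their O'Nan--Scott analysis shows that joint point- and line-primitivity on a thick generalised quadrangle forces $G$ to be almost simple; flag-transitivity is used only at the final stage to pin down the socle as being of Lie type. Without it, the socle can also be alternating (sporadic socles being excluded in the course of~\cite{Bamberg:2012yf}, and product-type / diagonal-type actions being eliminated by the earlier steps of their O'Nan--Scott argument). To match conclusion~(iii), one must verify that an alternating socle forces $\gcd(s,t)=1$; I expect this to follow from divisibility constraints comparing $(s+1)(st+1) = |G{:}G_P|$ with the possible indices of maximal subgroups of $A_n$ or $S_n$, using that the point- and line-counting numbers of a generalised quadrangle are essentially symmetric in $s$ and $t$.

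For $d=3$, I would apply Kantor~\cite{Kantor:1987qq}. If the projective plane is non-classical, then $G$ contains a cyclic normal subgroup of prime order $p$ acting regularly on points, so $G\le\mathrm{AGL}(1,p)$ is soluble, giving~(ii). In the classical case $\mathcal{S}=\PG(2,q)$, the point- and line-primitive collineation subgroups of $\mathrm{P}\GammaL(3,q)$ are either almost simple containing $\PSL(3,q)$, yielding~(i), or they lie inside the normalizer of a Singer cycle, which is metacyclic and hence soluble, yielding~(ii).

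The main obstacle is the $d=4$ step: carefully extracting the recast of~\cite{Bamberg:2012yf} without flag-transitivity, and locating the source of the $\gcd(s,t)=1$ condition in the alternating situation. This means re-running the O'Nan--Scott case analysis of that paper with each flag-transitive argument replaced by one using only point- and line-primitivity together with the relevant divisibility facts for generalised quadrangles; the remaining two cases are essentially direct appeals to published theorems.
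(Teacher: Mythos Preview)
Your treatment of $d\in\{6,8\}$ is fine and matches the paper. For $d=3$ you invoke Kantor~\cite{Kantor:1987qq} rather than Gill~\cite{Gill:2007xw} as the paper does; your version is in fact more explicit, and the split into ``non-classical $\Rightarrow$ regular prime-order normal subgroup $\Rightarrow$ soluble'' versus ``classical $\Rightarrow$ contains $\PSL(3,q)$ or sits in a Singer normaliser'' is a reasonable way to reach the same dichotomy.

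The gap is in the $d=4$ case, specifically in how you obtain $\gcd(s,t)=1$. You propose to extract it from ``divisibility constraints comparing $(s+1)(st+1)=|G:G_P|$ with the possible indices of maximal subgroups of $A_n$ or $S_n$''. There is no such argument in sight, and index arithmetic alone does not separate $\gcd(s,t)=1$ from $\gcd(s,t)>1$ here. The paper's mechanism is structural, not numerical: Bamberg et~al.~\cite{Bamberg:2012yf} prove, using \emph{only} point-primitivity (their Sections~5.1 and~5.2), that when $\operatorname{soc}(G)=A_n$ the point stabiliser $G_x$ must act \emph{primitively} on $\{1,\dots,n\}$. By duality, line-primitivity gives the same for line stabilisers. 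Consequently the $3$-cycle $h=(1\;2\;3)$ lies in no point or line stabiliser (else that stabiliser would contain $A_n$), so $h$ fixes neither a point nor a line. But \cite[Lemma~3.4]{Bamberg:2012yf} says that if $\gcd(s,t)>1$ then every automorphism of order~$2$ or~$3$ fixes a point or a line. This contradiction forces $\gcd(s,t)=1$. So you do not need to re-run the whole O'Nan--Scott analysis of \cite{Bamberg:2012yf}; you need to quote their reduction of the alternating case to primitive point stabilisers, apply duality, and then feed the $3$-cycle into their fixed-element lemma.
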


\section{Background} \label{sec2}

We first recall some basic facts about generalised polygons, referring the reader to the book by Van Maldeghem~\cite{Van-Maldeghem:1998sj} for proofs. 
Let $\mathcal{S}$ be a generalised $d$-gon, with point set $\mathcal{P}$, and line set $\mathcal{L}$. 
The requirement that the incidence graph of $\mathcal{S}$ have diameter $d$ and girth $2d$ is equivalent to the condition that 
\begin{enumerate}[(i)]
\item there are no ordinary $k$-gons in $\mathcal{S}$ for $2\le k<d$, and
\item any two elements of $\mathcal{P} \cup \mathcal{L}$ are contained in some ordinary $d$-gon.
\end{enumerate}
In particular, if $d>4$ then there are no triangles ($3$-gons) and no quadrangles in the geometry.  
If $\mathcal{S}$ is thick with order $(s,t)$ (as defined in Section~\ref{section:introduction}), then we have the following properties.
\begin{itemize}
\item[(PH)] If $\mathcal{S}$ is a generalised hexagon, then $st$ is a square, and $|\mathcal{P}| = (s+1)(s^2t^2+st+1)$.  
\item[(PO)] If $\mathcal{S}$ is a generalised octagon, then $2st$ is a square, and $|\mathcal{P}| = (s+1)(s^3t^3+s^2t^2+st+1)$.  
\end{itemize}

Given points $x$ and $y$ of a generalised hexagon or octagon, we write $x \sim y$ if $x\neq y$ and $x,y$ lie on a common line, and in this case we denote this (unique) line by $\langle x,y \rangle$. 
For several of the arguments in the proof of Theorem~\ref{thm:main}, we begin by constructing (or otherwise deducing the existence of) an automorphism $g$ such that $x\sim xg$ for some point $x$. 
The idea is then to obtain a contradiction by having $g$ fix the line $\langle x,xg \rangle$ while showing that, on the other hand, the stabilisers of $x$ and $\langle x,xg \rangle$ are equal. 
The following lemma is extremely useful for these sorts of arguments. 

\begin{lemma} \label{diamondOriginal}
Let $\mathcal{S}$ be a finite thick generalised hexagon or octagon of order $(s,t)$, and let $\mathcal{P}$ denote the set of points of $\mathcal{S}$. 
\begin{enumerate}[{\rm (i)}]
\item If $\operatorname{gcd}(s,t) \neq 1$ and $g \in \operatorname{Aut}(\mathcal{S})$ is fixed-point free, then there exists $x\in\mathcal{P}$ such that $x \sim xg$. 
\item Let $x\in\mathcal{P}$ and $g_1,g_2\in \operatorname{Aut}(\mathcal{S})$ such that $x\sim xg_1$, $x\sim xg_2$, and $xg_1g_2=xg_2g_1$. 
If $xg_1g_2 \neq x$, then $x,xg_1,xg_2$ all lie on a common line. 
\item If $H\le\operatorname{Aut}(\mathcal{S})$ is transitive on $\mathcal{P}$, then the centraliser of $H$ in $\operatorname{Aut}(\mathcal{S})$ is intransitive on $\mathcal{P}$. 
\item Let $x,y_1,y_2\in\mathcal{P}$ such that $x\sim y_1$ and $x\sim y_2$, and let $g\in\operatorname{Aut}(\mathcal{S})$ such that $xg\neq x$. 
If $g$ fixes $y_1$ and $y_2$, then $x,y_1,y_2,xg$ all lie on a common line. 
\end{enumerate}
\end{lemma}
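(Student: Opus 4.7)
The common engine behind parts (ii) and (iv) is that the incidence graph of $\mathcal{S}$ has girth $2d\in\{12,16\}$, so any closed walk of length $8$ must degenerate by having two of its vertices coincide; in particular, there are no ``triangles'' (three pairwise-collinear non-collinear points) in $\mathcal{S}$.

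For part (ii), my plan is to consider the closed walk of length $8$ in the incidence graph given by the alternating sequence
\[
x,\ \langle x,xg_1\rangle,\ xg_1,\ \langle xg_1,xg_1g_2\rangle,\ xg_1g_2,\ \langle xg_2,xg_2g_1\rangle,\ xg_2,\ \langle x,xg_2\rangle,
\]
where $\langle xg_1,xg_1g_2\rangle=\langle x,xg_2\rangle g_1$ and $\langle xg_2,xg_2g_1\rangle=\langle x,xg_1\rangle g_2$, both well-defined thanks to $x\sim xg_i$ and the commutativity $xg_1g_2=xg_2g_1$. Under the stated hypotheses the four point-labels of this walk are pairwise distinct, so two of the four line-labels must coincide. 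A case-by-case check of the six possibilities, using the absence of triangles to rule out the ``diagonal'' cases, shows in every instance that $x,xg_1,xg_2$ lie on a common line. Part (iv) follows by applying the same device to the walk $x,\langle x,y_1\rangle, y_1,\langle y_1,xg\rangle, xg,\langle xg,y_2\rangle, y_2,\langle x,y_2\rangle$, noting that $\langle y_i,xg\rangle=\langle y_i,x\rangle g$ because $g$ fixes $y_i$. In the nontrivial case $y_1\neq y_2$ the four points are pairwise distinct (using $xg\neq x$ and $y_ig=y_i$), so again two of the four lines must coincide and an identical case analysis forces $\{x,xg,y_1,y_2\}$ onto a single line.

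Part (iii) is a consequence of (ii). Suppose for contradiction that $C:=C_{\operatorname{Aut}(\mathcal{S})}(H)$ is transitive on $\mathcal{P}$; as the centraliser of a transitive subgroup, $C$ is semiregular, hence regular. Fix $x\in\mathcal{P}$ and two distinct lines $\ell_1,\ell_2$ through $x$; for $y_i\in\ell_i\setminus\{x\}$ let $g_i\in C$ be the unique element with $xg_i=y_i$. The $g_i$ commute, $x\sim xg_i$, and $x,xg_1,xg_2$ share no common line (else $\ell_1=\ell_2$), so (ii) forces $xg_1g_2=x$, whence $g_2=g_1^{-1}$ by regularity. But then $y_2$ is uniquely determined by $g_1$, so $\ell_2\setminus\{x\}$ is a singleton, contradicting $s\ge 2$.

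Part (i) is the most delicate and I would attack it by a Benson-style eigenvalue argument. The point-collinearity graph $\Gamma$ of $\mathcal{S}$ is distance-regular with adjacency matrix $A$ whose spectrum is determined by $s$, $t$, and $\sqrt{st}$ (hexagon) or $\sqrt{2st}$ (octagon). For the permutation matrix $P_g$ one has $\operatorname{tr}(P_g)=f_0(g)$ (the number of fixed points) and $\operatorname{tr}(AP_g)=f_1(g):=|\{x\in\mathcal{P}:x\sim xg\}|$. Decomposing $P_g$ along the $A$-eigenspaces and demanding that the multiplicity of each nontrivial eigenvalue on each $g$-invariant summand be a nonnegative integer yields a divisibility condition relating $f_0(g)$, $f_1(g)$, and the irrational eigenvalues $s-1\pm\sqrt{st}$ (or $s-1\pm\sqrt{2st}$). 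Substituting $f_0(g)=f_1(g)=0$ produces a constraint that cannot hold when $\gcd(s,t)\neq 1$. The main obstacle is selecting the right linear combination of traces $\operatorname{tr}(A^iP_g)$ so that the $\sqrt{st}$-contribution is isolated cleanly, and handling the hexagon and octagon cases in parallel.
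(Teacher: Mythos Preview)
Your treatments of (ii) and (iv) are correct and are in the spirit of what the paper does. Parts (i)--(iii) are only cited in the paper (to Temmermans et al.\ and to Schneider--Van Maldeghem respectively), and the proof of (ii) in the latter is precisely the closed $8$-walk/girth argument you describe. For (iv) the paper argues slightly more directly---assuming $y_1\not\sim y_2$, it observes that $xg\sim y_1$ and $xg\sim y_2$ (since $g$ fixes the $y_i$), and then $x,y_1,xg,y_2$ is a quadrangle or degenerates to a triangle---but this is your $8$-walk argument in different clothing. Your sketch for (i) is on the right track and is indeed how the cited reference proceeds, though of course it is only a sketch.

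There is, however, a genuine gap in your argument for (iii). You choose $g_1,g_2\in C$ and assert that ``the $g_i$ commute'', but the centraliser of a transitive group need not be abelian: if $H$ is the left regular representation of a nonabelian group $K$, then $C$ is the right regular representation, which is isomorphic to $K$ and hence nonabelian. So (ii) does not apply to your pair $g_1,g_2$ as stated. The repair is easy once noticed: since $C$ is assumed transitive and $H$ centralises $C$, the group $H$ is itself semiregular, hence regular. Now take $g_1\in H$ with $xg_1=y_1\in\ell_1\setminus\{x\}$ and $g_2\in C$ with $xg_2=y_2\in\ell_2\setminus\{x\}$; these genuinely commute, and your counting finishes the job: (ii) forces $xg_1g_2=x$, so $g_2$ is the unique element of $C$ sending $y_1$ to $x$, whence $y_2=xg_2$ is determined by $y_1$, contradicting $s\ge 2$.
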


\begin{proof}
Part~(i) is proved in \cite[Corollary~5.2 and Lemma~6.2]{Temmermans:2009zm}, and parts~(ii) and~(iii) are proved in \cite[Lemmas~3.2(iii) and~(iv)]{Schneider:2008lr}. 
It remains to prove (iii). 
First suppose, towards a contradiction, that $y_1\not\sim y_2$. 
Since $x\sim y_1$ and $x\sim y_2$ and $g$ is an automorphism (collineation), we have $xg\sim y_1g = y_1$ and $xg\sim y_2g = y_2$. 
If $xg$ lies on the line $\langle x,y_1 \rangle$, then $x,xg,y_2$ form a triangle, which is impossible. 
Similarly, $xg$ cannot lie on the line $\langle x,y_2 \rangle$. 
However, this implies that $x,y_1,xg,y_2$ form a quadrangle, which is also impossible. 
Hence $y_1\sim y_2$, and so $x,y_1,y_2$ all lie on the common line $\lambda = \langle y_1,y_2 \rangle$ because $\mathcal{S}$ contains no triangles. 
Since $g$ fixes $y_1$ and $y_2$, it fixes $\lambda$ setwise; that is, $\lambda g = \langle y_1g,y_2g \rangle = \langle y_1,y_2 \rangle = \lambda$. 
Hence $xg$ also lies on $\lambda$. 
\end{proof}

\section{The proof} \label{sec3}

We now work under the following hypothesis. 

\begin{hypothesis} \label{hyp}
Let $\mathcal{S}$ be a finite generalised hexagon or octagon, with point set $\mathcal{P}$, and order $(s,t)$, where both $s$ and $t$ are at least $2$. 
Suppose that $G \le \operatorname{Aut}(\mathcal{S})$ acts primitively on $\mathcal{P}$, and let $M$ be a minimal normal subgroup of $G$. 
\end{hypothesis}

Recall that the structure of a primitive permutation group is described by the O'Nan--Scott Theorem. 
We follow the version of the O'Nan--Scott Theorem given by Praeger~\cite[Section~5]{MR1477745}, which splits the primitive permutation groups into eight types. 
It was shown by Schneider and Van Maldeghem~\cite[Lemma~4.2(i)]{Schneider:2008lr} that if Hypothesis~$\ref{hyp}$ holds then $G$ cannot have O'Nan--Scott type HA (affine), HS (holomorph simple), or HC (holomorph compound). 
Indeed, if $G$ has one of these three types then the centraliser of $M$ in $G$ is transitive on $\mathcal{P}$, which is impossible by Lemma~\ref{diamondOriginal}(iii). 
The remaining five O'Nan--Scott types are described in Table~\ref{tab:primgroups}, and we note that in these cases $G$ has a unique minimal normal subgroup $M$, and $M$ has the form
\[
M \cong T^k, \quad \text{where $T$ is a nonabelian finite simple group and } k\ge 1. 
\] 
Here we also recall that (in general) a permutation group $G$ on a set $\Omega$ is said to be {\em semiregular} if, for every $x\in \Omega$, the stabiliser $G_x := \{ g\in G \mid xg=x \}$ of $x$ is the trivial subgroup. 
The group is said to be {\em regular} if it is semiregular and transitive, and in this case the cardinalities of $G$ and $\Omega$ are equal. 

We show in Section~\ref{s:PACD} that $G$ cannot have O'Nan--Scott type PA or CD under Hypothesis~$\ref{hyp}$, and in Section~\ref{s:SDTW} we show that $G$ cannot have type SD or TW. 
This leaves the possibility that $G$ is an almost simple group (type AS). 
An existing result of Buekenhout and Van Maldeghem~\cite{Buekenhout:1993bf} shows that an almost simple group with socle a sporadic simple group cannot act primitively (or even transitively) on the points of a finite thick generalised hexagon or octagon. 
Thus, to complete the proof of Theorem~\ref{thm:main}, it remains to show that the socle of $G$ cannot be an alternating group. 
This is done in Section~\ref{s:An}. 

\begin{center}
\small
\begin{table}
\begin{tabular}{l|p{11.2cm}} \hline 
Type & Description \\ \hline 
AS (almost simple) & $M\cong T \le G \le \operatorname{Aut}(T)$. \\ 
TW (twisted wreath) & $M\cong T^k$, $k\ge 2$, acts regularly on $\Omega$. \\ 
SD (simple diagonal) & $M\cong T^k$, $k\ge 2$, $M_{\omega}$ ($\omega\in\Omega$) is a full diagonal subgroup of $M$, $|\Omega| = |T|^{k-1}$, and $G$ acts primitively on the set of $k$ simple direct factors of $M$.\\ 
CD (compound diagonal) & $\Omega = \Gamma^\ell$ and $G\le H \text{ wr } S_\ell$, $H\le \operatorname{Sym}(\Gamma)$ primitive of type SD, $\operatorname{soc}(H) = T^{k/\ell}$, $k\ge 2$ and $k/\ell\ge 2$; $G$ acts transitively on the simple direct factors of $M \cong T^k$. \\ 
PA (product action) & $\Omega = \Gamma^k$ and $G\le H \text{ wr } S_k$, $H\le \operatorname{Sym}(\Gamma)$ primitive of type AS, $\operatorname{soc}(H) \cong T$; $G$ acts transitively on the simple direct factors of $M \cong T^k$, $k\ge 2$. \\ \hline
\end{tabular} 
\vspace{2mm}
\caption{\small Five of the possible O'Nan--Scott types of primitive groups $G \le \operatorname{Sym}(\Omega)$. 
Here $M$ is the unique minimal normal subgroup of $G$, and $T$ denotes a nonabelian finite simple group. 
}
\label{tab:primgroups}
\end{table}
\end{center}

\subsection{Types PA and CD} \label{s:PACD}

Suppose that $G$ has O'Nan--Scott type PA or CD under Hypothesis~$\ref{hyp}$. 
Then $\mathcal{P}$ can be identified with a Cartesian product $\Gamma^\ell$ in such a way that $G$ embeds in the wreath product $H \operatorname{wr} S_\ell$, where $H$ is a primitive subgroup of $\operatorname{Sym}(\Gamma)$ and $G$ induces a transitive subgroup of $S_\ell$. 
Write $N = \operatorname{soc}(H)$ and note that $N^\ell = \operatorname{soc}(G) = M$. 
Let $\alpha$ be an arbitrary element of $\Gamma$, and consider the point $x\in \Gamma^\ell$ represented by the $\ell$-tuple $(\alpha,\ldots,\alpha)$. 
Our argument is in three steps, marked (i)--(iii) below. 
In step, (i) we show that there exists an element $y$ collinear with $x$ such that $y$ is represented by the $\ell$-tuple $(\beta,\alpha,\ldots,\alpha)$ for some $\beta\in \Gamma \backslash \{ \alpha \}$.
In step, (ii) we show that $G_\lambda = G_x$, where $\lambda$ is the line collinear with both $x$ and $y$. 
Finally, in step (iii) we construct an automorphism $g$ that fixes $\lambda$ but not $x$, thereby obtaining a contradiction and hence proving that $G$ in fact cannot have type PA or CD.

\begin{enumerate}[(i)]
\item {\em There exists a point $y$ such that $y\sim x$ and $y=(\beta,\alpha,\ldots,\alpha)$ for some $\beta\in \Gamma \backslash \{ \alpha \}$.} 
This is established by Schneider and Van Maldgehem~\cite[proof of Lemma~4.2(ii)]{Schneider:2008lr} without using flag-transitivity, but we include a proof to make it clear that flag-transitivity is not needed. 
Let $y=(\beta_1,\beta_2,\ldots,\beta_\ell)$ be a point collinear with $x$, and suppose that $y$ has $i$ entries different from $\alpha$. 
If $i=1$ then, without loss of generality, $y$ differs from $x$ in the first component and we are done. 
Now assume that $i\ge 2$. 
Then, without loss of generality, $\beta_1 \neq \alpha \neq \beta_2$ and, if $i<\ell$, $\beta_{i+1} = \cdots = \beta_\ell = \alpha$. 
Since $G$ has type PA or CD, the primitive group $H$ has type AS or SD (respectively), so $N=\operatorname{soc}(H)$ is not regular and it follows from \cite[Corollary~2.2(a)]{Betten:2003vl} that the only point of $\Gamma$ fixed by $N_\alpha$ is $\alpha$. 
In particular, $N_\alpha \neq N_{\beta_1}$, and hence there exists $g\in N_\alpha$ such that $\beta_1' := \beta_1 g \neq \beta_1$. 
Set $\bar{g} = (g\; 1\ldots 1)$ and $y' = y\bar{g} = (\beta_1',\beta_2,\ldots,\beta_\ell)$. 
Then $\bar{g} \in M \le G$, and $\bar{g}$ fixes $x$ so $x\sim y'$. 
Similarly, we can choose $h \in N_{\beta_2}$ such that $\alpha' := \alpha h \neq \alpha$, and we set $\bar{h} = (1\; h\; 1\ldots 1) \in M$. 
Then $x \neq x\bar{h} = (\alpha, \alpha', \alpha, \ldots, \alpha)$. 
That is, $x\bar{h}$ differs from $x$ in only one component. 
To complete the proof of (i), it suffices to check that $x \sim x\bar{h}$. 
Since $\bar{h}$ fixes both $y$ and $y'$ but not $x$, Lemma~\ref{diamondOriginal}(iv) implies that $x,x\bar{h},y,y'$ all lie on a common line. 
In particular, $x\sim x\bar{h}$.

\item {\em $G_\lambda=G_x$, where $\lambda := \langle x,y \rangle$ with $y$ given by \textnormal{(i)}.} 
The following argument is also adapted from \cite[proof of Lemma~4.2(ii)]{Schneider:2008lr}. 
We show that $G_x \le G_\lambda$, which implies that $G_\lambda = G_x$ because $G_x$ is a maximal subgroup of the primitive group $G$ and because $G_\lambda \neq G$ (since $G$ acts transitively on the set of all points of $\mathcal{S}$, it cannot stabilise a single line). 
Let $g\in G_x$ and write $g = (g_1,\ldots,g_\ell) \sigma \in G$, where $g_1,\ldots,g_\ell\in H$ and $\sigma \in S_\ell$, so that $\alpha g_i = \alpha$ for all $i \in \{ 1,\ldots,\ell \}$. 
If $g$ fixes $y$ then it fixes $\lambda$ setwise as required, so assume that $yg \neq y$. 
We show that $yg$ lies on $\lambda$, which also implies that $g$ fixes $\lambda$ because then $\lambda g = \langle x,y \rangle g = \langle xg,yg \rangle = \langle x,yg \rangle = \lambda$. 
First suppose that $1\sigma \neq 1$. 
Then all components of $yg$ are equal to $\alpha$, except the component in position $1\sigma$, which is equal to $\beta g_1$. 
Choose $h, h'\in N$ such that $\alpha h = \beta$ and $\alpha h' = \beta g_1$, and set $\bar{h} = (h\; 1\; \ldots\; 1)$ and $\bar{h}' = (1\; \ldots\; 1\; h'\; 1\; \ldots\;1)$, where $h'$ appears in position $1\sigma$ of $\bar{h}'$. 
Then $x \sim x\bar{h} = y$, $x \sim x\bar{h}' = yg$, and $x\bar{h}\bar{h}' = x\bar{h}'\bar{h} \neq x$, so Lemma~\ref{diamondOriginal}(ii) implies that $x,y,yg$ all lie on a common line. 
That is, $yg$ lies on $\lambda$. 
Now suppose that $1\sigma = 1$. 
In this case, $yg = (\beta g_1,\alpha,\ldots,\alpha)$. 
Take any $g' = (g_1',\ldots,g_\ell')\sigma' \in G_x$ such that $1\sigma' \neq 1$. 
We have just shown that $g'$ fixes $\lambda$, so in particular $yg'$ lies on $\lambda$. 
Without loss of generality, we may assume that $1\sigma' = 2$, so that $yg' = (\alpha,\beta g_1',\alpha,\ldots,\alpha)$. 
Now let $\bar{h}'' = (h'\; 1\; \ldots\; 1)$, where $\alpha h' = \beta g_1$ as above, and let $\bar{h}''' = (1\; h''\; 1\; \ldots\; 1)$, where $\alpha h'' = \beta g_1'$. 
Then $x\bar{h}'' = yg$, $x\bar{h}''' = yg'$, and $x\bar{h}''\bar{h}''' = x\bar{h}'''\bar{h}'' \neq x$, so Lemma~\ref{diamondOriginal}(ii) implies that $yg,yg',x$ all lie on a common line, and hence $yg$ lies on $\lambda$ as required.

\item {\em A contradiction: there exists $g \in G_\lambda$ such that $g \not \in G_x$.} 
Choose $g \in G_y$ such that $g=(g_1,\ldots,g_\ell) \sigma$ with $1\sigma = 2$. 
Such an element exists because the stabiliser of any point in $G$ is transitive on the simple direct factors of $M$. 
Let $i$ satisfy $i \sigma =1$, and note that $i \neq 1$. 
Then $(\beta,\alpha,\ldots,\alpha) = y = yg = (\beta,\alpha,\ldots,\alpha)g = (\alpha g_i,\beta g_1,\ldots)$, where the components from the third position onwards are of the form $\alpha g_j$ with $j \not \in \{1,i\}$. 
That is, $\alpha g_i = \beta$, $\beta g_1=\alpha$, and $\alpha g_j = \alpha$ for all $j \not \in \{1,i\}$.
Next, observe that $xg \neq x$, $xg \neq y$, and $xg \sim y$. 
Indeed, $xg = (\alpha g_i,\alpha g_1,\alpha,\ldots,\alpha) = (\beta,\alpha g_1,\alpha,\ldots,\alpha)$ is not equal to $x$ or $y$ because $\beta g_1=\alpha$ and hence $\alpha g_1 \neq \alpha$, and $x\sim y$ implies $xg\sim yg=y$. 
Choose $h', h'' \in N = \operatorname{soc}(H)$ such that $\beta h' = \alpha$ and $\alpha h'' = \alpha g_1$, and write $\bar{h}' = (h'\; 1\ldots 1)$ and $\bar{h}'' = (1\; h''\; 1\ldots 1)$. 
Then $y\bar{h}'=x$, $y\bar{h}''=xg$, $\bar{h}'$ and $\bar{h}''$ commute, and $y\bar{h}'\bar{h}'' \neq y$, so Lemma~\ref{diamondOriginal}(ii) implies that $xg$ lies on $\lambda$. 
It follows that $\lambda g = \langle x,y \rangle g = \langle xg,yg \rangle = \langle xg,y \rangle = \lambda$, namely that $g \in G_\lambda$. 
However, $G_\lambda = G_x$ from (ii), so we have a contradiction because $g \not \in G_x$. 
\end{enumerate} 
To summarise, we have proved the following result.

\begin{lemma} \label{lemma:PACD}
If Hypothesis~$\ref{hyp}$ holds then the O'Nan--Scott type of $G$ is not PA or CD.
\end{lemma}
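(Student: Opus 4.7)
The plan is to work inside the wreath-product decomposition available for types PA and CD and derive a contradiction by comparing the stabiliser of a well-chosen point $x$ with the stabiliser of a line through $x$. Identify $\mathcal{P}$ with $\Gamma^\ell$ so that $G \le H \operatorname{wr} S_\ell$ with $H \le \operatorname{Sym}(\Gamma)$ primitive of type AS or SD; then $M = N^\ell$ for $N = \operatorname{soc}(H)$, and crucially $N$ is non-regular on $\Gamma$ in both cases. I would take the diagonal point $x = (\alpha, \ldots, \alpha)$ and work in three stages: (i) construct a neighbour $y = (\beta, \alpha, \ldots, \alpha)$ of $x$ differing from $x$ in only the first coordinate; (ii) show $G_{\langle x, y \rangle} = G_x$; (iii) exhibit an element of $G_{\langle x, y \rangle} \setminus G_x$.

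For (i), I would begin with any neighbour $y = (\beta_1, \ldots, \beta_\ell)$ of $x$ and argue by downward induction on the number $i$ of coordinates in which $y \ne x$. When $i \ge 2$, the non-regularity of $N$ (via \cite[Corollary~2.2(a)]{Betten:2003vl}) provides $g \in N_\alpha$ with $\beta_1 g \ne \beta_1$ and $h \in N_{\beta_2}$ with $\alpha h \ne \alpha$. The lifts $\bar g = (g, 1, \ldots, 1)$ and $\bar h = (1, h, 1, \ldots, 1)$ in $M$ produce a second neighbour $y' = y \bar g$ of $x$; both $y$ and $y'$ are fixed by $\bar h$, and $x \bar h \ne x$, so Lemma~\ref{diamondOriginal}(iv) forces $\{x, x\bar h, y, y'\}$ onto a single line. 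Hence $x \bar h$ is a neighbour of $x$ differing from it in strictly fewer coordinates, completing the induction.

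For (ii), set $\lambda = \langle x, y \rangle$. Since $G$ is point-transitive, $G_\lambda \ne G$, and by maximality of $G_x$ in $G$ it suffices to show $G_x \le G_\lambda$. Given $g = (g_1, \ldots, g_\ell) \sigma \in G_x$ with $yg \ne y$, I must show $yg$ lies on $\lambda$. If $1\sigma \ne 1$, then $y$ and $yg$ each differ from $x$ in a single, distinct coordinate, so each arises from $x$ by applying an element of $M$ supported in its own coordinate; these commute, and Lemma~\ref{diamondOriginal}(ii) puts $x, y, yg$ on a common line. The subcase $1\sigma = 1$ I would reduce to the first case by picking, via transitivity of $G$ on the $\ell$ simple factors of $M$, an auxiliary $g' \in G_x$ with $1\sigma' \ne 1$: the previous case gives $yg' \in \lambda$, and a second application of Lemma~\ref{diamondOriginal}(ii) relating $yg$ and $yg'$ (which again come from $x$ via commuting, disjoint-coordinate elements of $M$) forces $yg \in \lambda$.

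For the contradiction in (iii), I would use transitivity of $G_y$ on the simple factors of $M$ to pick $g = (g_1, \ldots, g_\ell) \sigma \in G_y$ with $1\sigma = 2$. A direct coordinate calculation using $yg = y$ yields $xg = (\beta, \alpha g_1, \alpha, \ldots, \alpha)$ with $\alpha g_1 \ne \alpha$, so $xg \ne x, y$ but $xg \sim y$; a final invocation of Lemma~\ref{diamondOriginal}(ii), with $y$ sent to $x$ and to $xg$ by commuting single-coordinate elements of $M$, places $xg$ on $\lambda$. Thus $g \in G_\lambda$ while $g \notin G_x$, contradicting (ii). I expect step (ii) to be the main obstacle: the clean commuting-coordinate trick only settles $1\sigma \ne 1$, and the $1\sigma = 1$ case must be handled by bootstrapping off an auxiliary stabiliser element whose existence depends on the transitive permutation action of $G$ on the $\ell$ simple direct factors of $M$.
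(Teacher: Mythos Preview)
Your proposal is correct and follows essentially the same three-step argument as the paper, with the same use of Lemma~\ref{diamondOriginal}(ii) and~(iv) and the same auxiliary elements of $M$ supported in single coordinates. The only cosmetic difference is that you frame step~(i) as a downward induction on the number of differing coordinates, whereas the paper observes directly that $x\bar h$ differs from $x$ in exactly one coordinate, so a single step already yields the desired neighbour.
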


\subsection{Types SD and TW} \label{s:SDTW}

We begin with two lemmas, from which it is then deduced that $G$ cannot have O'Nan--Scott type SD or TW under Hypothesis~$\ref{hyp}$. 
For the second lemma, recall again that a permutation group is said to be {\em semiregular} if every point stabiliser is trivial. 

\begin{lemma} \label{stNotCoprime}
Let $s$ and $t$ be positive integers such that $\gcd(s,t)=1$. 
\begin{enumerate}[{\rm (i)}]
\item If $st$ is a square then $(1+s)(1+st+s^2t^2)$ is not divisible by $4$. 
\item If $2st$ is a square then $(1+s)(1+st)(1+s^2t^2)$ is not divisible by $4$. 
\end{enumerate} 
\end{lemma}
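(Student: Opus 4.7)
My plan is to prove both parts by short parity calculations, exploiting the coprimality hypothesis to constrain $s$ and $t$ modulo small powers of $2$.

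For part (i), the key observation will be the identity $1 + st + s^2t^2 = 1 + st(1+st)$. Since $st(1+st)$ is a product of two consecutive integers and therefore even, the second factor $1 + st + s^2t^2$ is automatically odd. Hence the claim reduces to showing $4 \nmid 1+s$. For this I will invoke the standard fact that, since $\gcd(s,t) = 1$ and $st$ is a square, $s$ is itself a square; as any integer square is congruent to $0$ or $1$ modulo $4$, we have $1 + s \not\equiv 0 \pmod{4}$, completing this part.

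For part (ii), the plan is to show that all three factors are odd, so that the product is odd. I will first argue that $\gcd(s,t) = 1$ combined with $2st$ being a square forces exactly one of $s$, $t$ to be even: the $2$-adic valuation $v_2(2st) = 1 + v_2(s) + v_2(t)$ must be even, so $v_2(s) + v_2(t)$ is odd, and coprimality permits at most one of $v_2(s), v_2(t)$ to be positive, so exactly one is (and that one is in fact congruent to $1$ mod $2$). Without loss of generality take $s$ even and $t$ odd; then $s$, $st$, and $s^2t^2$ are all even, so each of $1+s$, $1+st$, $1+s^2t^2$ is odd, and the claim follows.

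I do not anticipate any serious obstacle. The argument is essentially mechanical once one notices the factorisation $1 + st + s^2 t^2 = 1 + st(1+st)$ for (i) and performs the $2$-adic valuation count for (ii); the only external input is the elementary lemma that coprime factors of a square are themselves squares.
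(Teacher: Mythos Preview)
Your argument for part~(i) is correct and essentially identical to the paper's: both observe that $1+st+s^2t^2$ is odd and then use that $s$ must itself be a square to rule out $s\equiv 3\pmod 4$.

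Part~(ii), however, has a genuine gap. The expression $(1+s)(1+st)(1+s^2t^2)$ is \emph{not} symmetric in $s$ and $t$, so your ``without loss of generality take $s$ even'' is unjustified. Indeed, your stronger claim that the product is odd is false in the other case: if $s$ is odd and $t$ is even, then $1+s$ is even and the product is even. You must treat this case, and it is precisely the case the paper handles. When $s$ is odd, one has $\gcd(s,2t)=1$, so from $s\cdot(2t)$ being a square one deduces that $s$ is a square, whence $s\equiv 1\pmod 4$ and $1+s\equiv 2\pmod 4$; since $1+st$ and $1+s^2t^2$ are odd (as $st$ is even), the product is exactly $2\pmod 4$. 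Your argument covers the easy case where $s$ is even (product odd), but misses this one.
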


\begin{proof}
(i) If $4$ divides $(1+s)(1+st+s^2t^2)$ then $s \equiv 3\pmod 4$ because $(1+st+s^2t^2)$ is odd. 
However, $st$ is a square and $\gcd(s,t) = 1$, so $s$ must be a square and hence $s \not \equiv 3\pmod 4$. 

(ii) If $4$ divides $(1+s)(1+st)(1+s^2t^2)$ then $s \equiv 3\pmod 4$ because $(1+st)(1+s^2t^2)$ is odd, since $st$ is even if $2st$ is a square. 
In particular, $s$ is odd. 
However, if $s$ odd is then $s$ must be a square because $2st$ is a square and $\operatorname{gcd}(s,t) = 1$, so $s \not \equiv 3\pmod 4$. 
\end{proof}

\begin{lemma} \label{semiregular}
Suppose that Hypothesis~$\ref{hyp}$ holds and that $M \cong T_1\times\cdots\times T_k$ for some pairwise isomorphic nonabelian finite simple groups $T_1,\ldots,T_k$, with $k\ge 2$. 
Then, if $\gcd(s,t)\ne 1$, there exist distinct $i,j \in \{1\ldots,k\}$ such that $T_i\times T_j$ is not semiregular on $\mathcal{P}$.
\end{lemma}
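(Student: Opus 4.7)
Argue by contradiction: suppose that $T_i\times T_j$ acts semiregularly on $\mathcal{P}$ for every pair of distinct indices $i,j$, so every non-identity element of any $T_i\times T_j$---and in particular of each $T_i$ alone---is fixed-point-free. A preliminary reduction using Lemma~\ref{diamondOriginal}~(iii) shows that no single $T_i$ acts transitively: if $T_1$ were transitive, being semiregular it would be regular, so $|T_1|=|\mathcal{P}|$; then $T_2\le C_{\operatorname{Aut}(\mathcal{S})}(T_1)$ has the same order and is semiregular, hence also transitive, contradicting Lemma~\ref{diamondOriginal}~(iii). Thus each $T_i$-orbit has size $|T|<|\mathcal{P}|$.

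Pick commuting non-identity elements $g\in T_1$ and $h\in T_2$, so that $g,h,gh$ are all fixed-point-free. Using $\gcd(s,t)\ne 1$, Lemma~\ref{diamondOriginal}~(i) supplies non-empty sets
\[
A_u:=\{z\in\mathcal{P}:z\sim zu\}\quad(u\in\{g,h,gh\}),
\]
and $A_g$ (respectively $A_h$) is invariant under $C_G(g)\supseteq\prod_{j\ne 1}T_j$ (respectively $C_G(h)\supseteq\prod_{j\ne 2}T_j$) because distinct $T_j$ commute elementwise. The crucial---and hardest---step is to produce a single point $x\in A_g\cap A_h$: one exploits the centraliser invariance together with Lemma~\ref{diamondOriginal}~(i) applied to the product $gh$, whose witness $y$ can be transported by elements of $\prod_{j\ne 1,2}T_j$, all of which centralise $g$ and $h$, so as to bring $y$ into both $A_g$ and $A_h$ simultaneously on a common $(T_1\times T_2)$-orbit. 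Once such $x$ exists, $xgh\ne x$ (since $gh$ is fpf), and Lemma~\ref{diamondOriginal}~(ii) places $x,xg,xh$ on a common line $\ell$.

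With $\ell$ in hand I would iterate: for every $i\in\{1,\dots,k\}$ and every $g_i\in T_i\setminus\{1\}$, the element $g_i$ commutes with $h$ (if $i\ne 2$) or with $g$ (if $i=2$), and the same argument yields $x\in A_{g_i}$; Lemma~\ref{diamondOriginal}~(ii) then forces $xg_i$ onto the unique line through $x$ and $xh$ (or $x$ and $xg$), which is $\ell$. Pair-semiregularity ensures the images $\{xg_i:g_i\in T_i\setminus\{1\},\,1\le i\le k\}$ are pairwise distinct points of $\ell\setminus\{x\}$, so $k(|T|-1)\le s$. Combining this inequality with $|T|\ge 60$, $k\ge 2$, the formulas (PH) and (PO) of Section~\ref{sec2}, and the identification $|\mathcal{P}|=|T|^{k-1}$ or $|T|^k$ that applies in the O'Nan--Scott types SD and TW to which this lemma is ultimately relevant, yields the desired arithmetic contradiction. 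The main obstacle is the production of the common witness $x\in A_g\cap A_h$ (and the need to keep the same $x$ throughout the iteration); the rest is a formal iteration of Lemma~\ref{diamondOriginal}~(ii) plus a routine parameter comparison.
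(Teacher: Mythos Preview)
Your proposal has a genuine gap at precisely the step you flag as ``crucial and hardest'': producing a point $x\in A_g\cap A_h$. The sketched mechanism does not work. If $y\sim y(gh)$ and $c\in\prod_{j\ne 1,2}T_j$ centralises both $g$ and $h$, then indeed $yc\sim yc(gh)$, but nothing forces $yc\sim (yc)g$ or $yc\sim (yc)h$ separately; centraliser invariance only tells you that $A_g$, $A_h$, $A_{gh}$ are each unions of $\prod_{j\ne 1,2}T_j$-orbits, not that they share an orbit. The iteration step compounds the problem: even granting some $x\in A_g\cap A_h$, you give no reason why the \emph{same} $x$ should lie in $A_{g_i}$ for every $g_i\in T_i\setminus\{1\}$, which is what Lemma~\ref{diamondOriginal}(ii) needs as input. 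Finally, the closing arithmetic invokes $|\mathcal{P}|=|T|^{k-1}$ or $|T|^k$, which is specific to types SD and TW and is not part of the hypotheses of the lemma you are proving.

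The paper's argument sidesteps the whole issue of intersecting the sets $A_u$. It fixes a single involution $h\in T_1$ and a single witness $x$ with $x\sim xh$ (so $h$ swaps $x,xh$ and hence fixes $\lambda=\langle x,xh\rangle$). The key idea is to conjugate $h$ by elements $g$ of the point stabiliser $G_x$: since $g$ fixes $x$, one gets $x=xg\sim xhg=xh^g$ \emph{for free}, so no second witness is ever needed. When $g\notin N_{G_x}(T_1)$, the conjugate $h^g$ lands in some $T_i$ with $i\ne 1$, so $h$ and $h^g$ commute and $hh^g\in T_1\times T_i$ is fixed-point-free by the semiregularity hypothesis; Lemma~\ref{diamondOriginal}(ii) then puts $xh^g$ on $\lambda$, giving $g\in G_\lambda$. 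A one-line factorisation ($a=(ab)b^{-1}$ with $b\notin N_{G_x}(T_1)$) handles the remaining $a\in N_{G_x}(T_1)$, yielding $G_x\le G_\lambda$, hence $G_x=G_\lambda$ by maximality, contradicting $h\in G_\lambda\setminus G_x$. The trick you are missing is to manufacture the needed collinearities by conjugating \emph{one} element by $G_x$, rather than trying to align independent witnesses for several elements.
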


\begin{proof}
Suppose that $\gcd(s,t)\ne 1$ but that $T_i\times T_j$ is semiregular for all distinct $i,j$. 
Then, in particular, $T_1$ is semiregular. 
Choose an involution $h\in T_1$, namely an element of order $2$. 
Such an element exists because the nonabelian finite simple group $T_1$ has even order, by the Feit--Thompson Theorem~\cite{MR0166261}. 
Since $h$ does not fix any point, Lemma~\ref{diamondOriginal}(i) says that there exists $x\in\mathcal{P}$ such that $x\sim xh$, and so $h$ fixes the line $\lambda = \langle x,xh \rangle$ setwise. 
We now show that $G_\lambda = G_x$, which is a contradiction because $h$ does not fix $x$. 
We first claim that $\lambda$ is fixed (setwise) by every element of $G_x$ that does not normalise $T_1$. 
Let $g\in G_x$ be such an element. 
Since $x=xg$ and $x \sim xh$, we have $x = xg \sim xhg = xgg^{-1}hg = xh^g$. 
Since $g$ does not normalise $T_1$, there exists $i\neq 1$ such that $T_1^g = T_i$. 
Thus $h^g$ commutes with $h$, and moreover, $hh^g$ lies in the semiregular group $T_1 \times T_i$ and hence does not fix $x$. 
Lemma~\ref{diamondOriginal}(ii) therefore implies that $x,xh,xh^g$ all lie on a common line. 
Therefore, $xh^g$ lies on $\lambda$, and hence $\lambda g= \langle x,xh \rangle g= \langle xg,xhg \rangle = \langle x,xh^g \rangle =\lambda$. 
That is, $g\in G_\lambda$ as claimed. 
Now take $a \in \mathbb{N}_{G_x}(T_1)$ and $b \in G_x \backslash \mathbb{N}_{G_x}(T_1)$. 
By the claim, both $ab$ and $b$ belong to $G_\lambda$, since $ab \not \in \mathbb{N}_{G_x}(T_1)$, and hence $a = (ab)b^{-1}\in G_\lambda$. 
Thus $G_x \le G_\lambda$, and since $G_x$ is a maximal subgroup of $G$ and $G_\lambda \neq G$ (as noted in Section~\ref{s:PACD}), it follows that $G_x = G_\lambda$ as required. 
\end{proof}

\begin{lemma} \label{lemma:SDTW}
If Hypothesis~$\ref{hyp}$ holds then the O'Nan--Scott type of $G$ is not SD or TW.
\end{lemma}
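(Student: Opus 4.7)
The plan is to split the argument based on whether $\gcd(s,t) = 1$, and apply two different tools in the two cases. In both cases the goal is to derive a contradiction from the detailed structural description of types SD and TW in Table~\ref{tab:primgroups}, together with the preparatory Lemmas~\ref{stNotCoprime} and~\ref{semiregular}.

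For the case $\gcd(s,t) \neq 1$, I would invoke Lemma~\ref{semiregular}, which guarantees distinct indices $i,j$ such that $T_i \times T_j$ is \emph{not} semiregular on $\mathcal{P}$. The contradiction comes from verifying that, in both types TW and SD, every pair of simple direct factors \emph{is} semiregular. In type TW the minimal normal subgroup $M = T_1 \times \cdots \times T_k$ acts regularly on $\mathcal{P}$, so $(T_i \times T_j)_x \le M_x = 1$ for every $x \in \mathcal{P}$. In type SD the point stabiliser $M_x$ is a full diagonal subgroup of $M$, hence an element of $M_x$ is determined by any one of its coordinates; an element of $M_x$ lying inside $T_i \times T_j$, whose components outside $\{i,j\}$ are trivial, must therefore be trivial in every coordinate. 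Thus $(T_i \times T_j)_x = (T_i \times T_j) \cap M_x = 1$, contradicting Lemma~\ref{semiregular}.

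For the case $\gcd(s,t) = 1$, I would combine Lemma~\ref{stNotCoprime} with the point-count formulas (PH) and (PO) from Section~\ref{sec2}. These give $|\mathcal{P}| = (1+s)(1+st+s^2t^2)$ in the hexagon case and $|\mathcal{P}| = (1+s)(1+st)(1+s^2t^2)$ in the octagon case, and Lemma~\ref{stNotCoprime} then forces $4 \nmid |\mathcal{P}|$. On the other hand, Table~\ref{tab:primgroups} records that $|\mathcal{P}| = |T|^{k-1}$ with $k \ge 2$ in type SD, and $|\mathcal{P}| = |T|^k$ with $k \ge 2$ in type TW, so in either subtype $|T|$ divides $|\mathcal{P}|$. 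Since $T$ is a nonabelian finite simple group, $|T|$ is even by the Feit--Thompson theorem, and a Sylow $2$-subgroup of order exactly $2$ is excluded by Burnside's normal $p$-complement theorem (which would contradict simplicity). Hence $4 \mid |T|$ and therefore $4 \mid |\mathcal{P}|$, the required contradiction.

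I expect the only mildly subtle step to be verifying, in type SD, that a full diagonal subgroup of $T^k$ meets each $T_i \times T_j$ trivially; everything else is bookkeeping with the O'Nan--Scott data from Table~\ref{tab:primgroups}, the numerical input from Lemma~\ref{stNotCoprime}, and the semiregularity criterion of Lemma~\ref{semiregular}. Since both cases exhaust the possibilities for $\gcd(s,t)$, this will establish Lemma~\ref{lemma:SDTW}.
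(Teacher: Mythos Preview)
Your argument has a genuine gap in the SD case when $k=2$. In that situation the only pair of simple direct factors is $T_1 \times T_2 = M$ itself, and the diagonal point stabiliser $M_x$ is certainly a nontrivial subgroup of $T_1 \times T_2$: your claim that ``an element of $M_x$ lying inside $T_i \times T_j$, whose components outside $\{i,j\}$ are trivial, must therefore be trivial in every coordinate'' relies on there being at least one coordinate outside $\{i,j\}$, which fails precisely when $k=2$. Consequently, in SD with $k=2$ the conclusion of Lemma~\ref{semiregular} (that some $T_i \times T_j$ is not semiregular) is already satisfied and yields no contradiction.

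The paper's proof handles this case by a separate argument: for SD with $k=2$, the two factors $T_1$ and $T_2$ are each transitive on $\mathcal{P}$ and centralise one another, which contradicts Lemma~\ref{diamondOriginal}(iii). Apart from this missing case, your approach is essentially the same as the paper's --- the paper first uses the divisibility-by-$4$ argument (your second case) to force $\gcd(s,t)>1$, and then applies Lemma~\ref{semiregular} exactly as you do, noting explicitly that this works for TW and for SD with $k\ge 3$. So you need only add the $k=2$ subcase via Lemma~\ref{diamondOriginal}(iii), and your proof is complete.
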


\begin{proof}
Write $M = T^k$, where $T$ is a nonabelian finite simple group (as in Table~\ref{tab:primgroups}). 
If $G$ has type TW then $M$ acts regularly on $\mathcal{P}$, and if $G$ has type SD then $M = R \times T$, where $R := T^{k-1}$ acts regularly on $\mathcal{P}$. 
In either case, the cardinality of $\mathcal{P}$ is divisible by $4$, because the order of every nonabelian finite simple group is divisible by $4$ (this is a well-known consequence of the Feit--Thompson Theorem, as explained in~\cite[pp.~1443--1444]{Schneider:2008lr}). 
It therefore follows from Lemma~\ref{stNotCoprime} and properties (PH) and (PO) in Section~\ref{sec2} that $\gcd(s,t)>1$, and Lemma~\ref{semiregular} then contradicts the regularity of $M$ in the TW case, and of $R$ in the SD case provided that $k\ge 3$. 
It remains to consider the case where $G$ has type SD with $k=2$. 
Here, $T_1$ and $T_2$ are both transitive minimal normal subgroups of $M$, and they centralise each other. 
This contradicts Lemma~\ref{diamondOriginal}(iii), and hence this case also cannot occur. 
\end{proof}

\begin{remark} \label{rem:Parkinson}
We note that a more general version of Lemma~\ref{diamondOriginal}(i) is given by Parkinson et~al. \cite[Theorem~10.2]{Parkinson}. 
It says that, without the assumption $\gcd(s,t)>1$, every fixed-point free automorphism will either map some point $x$ to a point collinear with $x$, or it will map some point $x$ to a point at distance $4$  from $x$ in the incidence graph of $\mathcal{S}$. 
This implies the conclusion of Lemma~\ref{semiregular} without having to assume that $\gcd(s,t)>1$: if the point $xh$ in the proof is instead at distance $4$ from $x$, then there is a point $y$ with $x\sim y\sim xh$, but the fixed-point free involution $h$ swaps $x$ and $xh$ without fixing $y$, so a similar argument to that in the proof of Lemma~\ref{diamondOriginal}(iv) implies that $x\sim xh$, a contradiction.
On the other hand, we actually know that $\gcd(s,t)>1$ in the situation considered above (via Lemma~\ref{stNotCoprime}), so in a sense it is more natural to argue as we have done. 
(It is an open question whether the order $(s,t)$ of a finite thick generalised hexagon or octagon always satisfies $\gcd(s,t)>1$.) 
\end{remark}

\subsection{Type AS with socle an alternating group} \label{s:An}

By Lemmas~\ref{lemma:PACD} and~\ref{lemma:SDTW}, if Hypothesis~\ref{hyp} holds then $G$ must be an almost simple group. 
We now treat the case where $M=T$ is an alternating group. 
That is, we prove the following result.

\begin{lemma} \label{lemma:An}
If Hypothesis~$\ref{hyp}$ holds with $G$ an almost simple group, then the socle of $G$ is not an alternating group. 
\end{lemma}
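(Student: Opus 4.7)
The plan is to assume, for contradiction, that $\operatorname{soc}(G) = T \cong A_n$ with $n \ge 5$, so $T \le G \le S_n$, and then imitate the templates of Lemmas~\ref{lemma:PACD} and~\ref{lemma:SDTW}: locate a fixed-point-free $g \in G$ and a point $x$ with $x \sim xg$, then use the ``diamond'' parts of Lemma~\ref{diamondOriginal} to force $G_\lambda = G_x$ for the line $\lambda := \langle x,xg\rangle$, and finally exhibit an automorphism of $\lambda$ that moves $x$.

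First I would establish that $\gcd(s,t) > 1$. Since $|T| = n!/2$ has a large 2-part for every $n\ge 5$, and since $|\mathcal P| = |G:G_x|$ must satisfy one of the Feit--Higman formulae (PH) or (PO), I would argue in parallel with Lemma~\ref{lemma:SDTW}: if $\gcd(s,t)=1$, then Lemma~\ref{stNotCoprime} forbids $4 \mid |\mathcal P|$, so the point stabiliser $G_x \cap T$ would have to contain a Sylow $2$-subgroup of $A_n$ up to a factor of $2$. Combined with the lower bound $|\mathcal P|\ge 63$ coming from $s,t\ge 2$, this 2-adic restriction leaves only a very small list of possibilities for $G_x \cap T$ inside $A_n$, each of which is ruled out directly (in the worst case by a finite check for the smallest permissible $n$).

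Having $\gcd(s,t)>1$, I would next exploit the abundance of involutions in $A_n$ to find $h\in T$ of order $2$ acting without fixed points on $\mathcal P$; such $h$ exists because $|\mathcal P|$ is even and an averaging/parity argument over the conjugacy class of a double transposition in $A_n$ shows it cannot be that every involution fixes at least one point (otherwise a contribution-to-Burnside count forces too many $T$-orbits on $\mathcal P$). Lemma~\ref{diamondOriginal}(i) then yields $x \in \mathcal P$ with $x\sim xh$, and $h$ fixes $\lambda := \langle x,xh\rangle$ setwise. Using the rich centraliser structure of involutions in $A_n$---in particular, the many involutions $k\in T$ that commute with $h$ and with $h^g$ for $g\in G_x$---I would show via Lemma~\ref{diamondOriginal}(ii) applied to pairs $(h,h^g)$ that $xh^g$ lies on $\lambda$ for every $g\in G_x$, concluding that $G_x \le G_\lambda$. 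The maximality of $G_x$ in $G$ (since $G$ is point-primitive), together with $G_\lambda \neq G$, then forces $G_\lambda = G_x$, contradicting $h \in G_\lambda\setminus G_x$.

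The main obstacle will be the fixed-point-free step: in the natural action of $T$ on $\{1,\ldots,n\}$ every involution fixes points, and more generally for various ``small'' primitive actions of $A_n$ one must argue that \emph{some} conjugate of a chosen involution is fixed-point-free on $\mathcal P$. Handling this without invoking the CFSG-dependent classification of maximal subgroups of $A_n$ is the delicate part; I expect it to require a careful orbit-length analysis combined with the lower bound $|\mathcal P|\ge 63$ (and its octagon analogue), and a separate finite check at the bottom of the range of $n$.
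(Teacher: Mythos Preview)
Your proposal has a genuine structural gap: the ``find a fixed-point-free involution and force $G_\lambda=G_x$'' template that worked for types SD and TW does not transfer to the almost-simple alternating case, because in the most natural primitive actions of $A_n$ and $S_n$ there are \emph{no} fixed-point-free involutions at all.  Concretely, if $G_x$ is intransitive on $\{1,\ldots,n\}$ (so points are $k$-subsets) or imprimitive (so points are partitions into blocks of size $k$), then every involution of $S_n$ fixes many $k$-subsets and many such partitions; the averaging/parity argument you sketch cannot rescue this, and you yourself flag this step as ``the main obstacle'' without resolving it.  Likewise, your opening move to force $\gcd(s,t)>1$ via $4\mid|\mathcal P|$ does not go through uniformly: for the intransitive action $|\mathcal P|=\binom{n}{k}$, and by Kummer's theorem there are infinitely many pairs $(n,k)$ with $4\nmid\binom{n}{k}$, so the ``very small list'' you expect is in fact infinite.

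The paper's proof proceeds quite differently.  After quoting Buekenhout--Van Maldeghem to assume $n\ge 14$ (hence $G=A_n$ or $S_n$), it splits into three cases according to whether $G_x$ acts intransitively, imprimitively, or primitively on $\{1,\ldots,n\}$.  The first two cases are handled by explicit combinatorics on $k$-subsets or partitions: one locates collinear points with prescribed intersection pattern and then uses Lemma~\ref{diamondOriginal}(iv) with hand-built permutations to derive a contradiction; the argument never appeals to $\gcd(s,t)>1$ or to fixed-point-free elements.  Only in the third case (primitive $G_x$) does something like your template appear: there one first proves a separate lemma that $4\mid|G:G_x|$ (using Jordan's theorem on primitive groups containing a double transposition), deduces $\gcd(s,t)>1$, and then takes $h=(1\;2\;3)$, which is fixed-point-free because a primitive $G_x\not\supseteq A_n$ contains no $3$-cycle.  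The step showing $G_x\le G_\lambda$ requires $h$ and $h^g$ to commute, which holds only when their supports are disjoint; the paper secures this via Neumann's Separation Lemma, not via ``many commuting involutions'' as you suggest.  In short, the essential missing idea in your plan is the three-way split on the natural action of $G_x$, together with the bespoke combinatorial arguments needed in the intransitive and imprimitive cases.
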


For the proof, suppose towards a contradiction that Hypothesis~\ref{hyp} holds with $G$ almost simple and $\operatorname{soc}(G) = A_n$ for some $n\ge 5$.  
Buekenhout and Van Maldeghem~\cite{Buekenhout:1993bf} have shown that such a group $G$ cannot act transitively on the points of a generalised hexagon or octagon if $n<14$, so we may assume that $n \ge 14$. 
In particular, we have $G=A_n$ or $S_n$ because $n\neq 6$. 
Our analysis splits into three cases, depending on whether the stabiliser $G_x$ of a point $x\in \mathcal{P}$ is an intransitive, transitive but imprimitive, or primitive subgroup of $S_n$ in the natural action on $\{1,\ldots,n\}$.

\subsubsection{Intransitive point stabiliser} \label{ss:intrans}

Let $x\in \mathcal{P}$ and suppose that $G_x$ acts intransitively on $\{1,\ldots,n\}$. 
Then $G_x$ stabilises a partition of $\{1,\ldots,n\}$ into two blocks, one of size $k$, say, and one of size $\ell$, where $k+\ell=n$. 
If $k=\ell$ then $G_x < G\cap (S_k \text{ wr } S_2) < G$, so $G_x$ is not a maximal subgroup, contradicting the primitivity of $G$ on $\mathcal{P}$. 
We may therefore assume, without loss of generality, that $k<\ell$. 
We then have $(A_k \times A_\ell) \cdot 2 \le G_x \le S_k \times S_\ell$, and the points of $\mathcal{S}$ can be labelled by $k$-element subsets of $\{ 1,\ldots,n \}$. 
We note also that $G_x$ must have at least four orbits on $\mathcal{P}$, as it preserves distance in the incidence graph of $\mathcal{S}$ and there are points in $\mathcal{P}$ at distances $0$, $2$, $4$, and $6$ (and even $8$ in the case of a generalised octagon) from $x$. 
If $k<3$ then the number of orbits of $G_x$ is less than four, and hence we may assume that $k\ge 3$. 

The following facts are proved by Bamberg et~al.~\cite[Lemmas~5.5 and~5.6]{Bamberg:2012yf} in the case of a generalised quadrangle (with the same assumptions as above). 
We note that the proofs given there are also valid for generalised hexagons and octagons, but we include proofs to make this clear.
\begin{itemize}
\item[(F1)] For every $i \in \{ 1,\ldots,k \}$, if $x, y \in \mathcal{P}$ are collinear and $|x\cap y| = i$, then any $x',y' \in \mathcal{P}$ with $|x'\cap y'| = i$ are also collinear. 
\item[(F2)] For every $i \in \{ 1,\ldots,k \}$, if $x, y \in \mathcal{P}$ are collinear and $|x\cap y| = i$, then there exists $y' \in \mathcal{P}$ such that $|x\cap y'|=i$ and $y' \not \sim y$.
\end{itemize}

\begin{proof}[Proof of~(F1) and~(F2)]
For (F1), it suffices to observe that $G=A_n$ or $S_n$ preserves collinearity and is transitive on pairs of $k$-subsets of $\{1,\ldots,n\}$ with intersection size $i$. 
For (F2), suppose towards a contradiction that every point $y'$ with $|x\cap y'|=i$ is collinear with $y$. 
By (F1), every such point $y'$ is also collinear with $x$, and hence lies on the line $\lambda := \langle x,y \rangle$ (because $\mathcal{S}$ contains no triangles). 
Let $J$ denote the generalised Johnson graph with vertices the $k$-subsets of $\{ 1,\ldots,n \}$ and two vertices adjacent if and only if their labels intersect in $i$ elements. 
If $n\neq k/2$ then $G$ acts primitively on the point set of $J$, and a partition into connected components is $G$-invariant, so $J$ is a connected graph. 
We prove by induction on the distance $\delta(x,x_1)$ between $x$ and $x_1\in J$ that all vertices of $J$ lie on the line $\lambda$. 
The inductive hypothesis is true for distance $0$ because $x$ lies on $\lambda$, and for distance $1$ by our assumption that every point $y'$ with $|x\cap y'|=i$ is collinear with $y$. 
Assume that it is true for distance $d$, and suppose that $\delta(x,x_1)=d+1$. 
Then $x_1$ has a neighbour $x_2\in J$ with $\delta(x,x_2)=d$, and thus, by the inductive hypothesis, $x_2$ lies on $\lambda$. 
Also, $x_2$ has a neighbour $x_3\in J$ such that $\delta(x,x_3)=d-1$, and hence $x_3$ lies on $\lambda$. 
If $x_1$ did not lie on $\lambda$ then $x_2$ would have two neighbours, $x_1$ and $x_3$, such that $|x_2\cap x_1| = |x_2\cap x_3| = i$ but $x_1$ and $x_3$ are not on the same line, so by vertex-transitivity the same would have to be true for $x$, a contradiction. 
Hence all vertices of $J$ lie on $\lambda$. 
However, this is a contradiction because the points of $\mathcal{S}$ do not all lie on a single line.  
\end{proof}

Now let $x$ denote the point with label $\{ 1,\ldots,k \}$, and let $k_1<k$ be maximal such that there exists a point $y \sim x$ with $|x\cap y| = k_1$. 
We claim that $k_1=0$. 
We first show that $k_1<k-1$ by adapting an argument from \cite[Section~5]{Bamberg:2012yf}. 
Suppose, towards a contradiction, that $k_1=k-1$. 
Then, without loss of generality, $y$ has label $\{1,\ldots,k-1,k+1 \}$. 
By (F2), there exists $y'\in \mathcal{P}$ such that $|x\cap y'|=k-1$ and $y' \not \sim y$, and by (F1) we have $y' \sim x$ and $|y\cap y'| \neq k-1$. 
In particular, without loss of generality we can write $y' = \{2,\ldots,k,k+2\}$. 
However, the automorphism $(1\; k+1)(k\; k+2)$ fixes both $y$ and $y'$ but does not fix $x$, so Lemma~\ref{diamondOriginal}(iv) implies that $y\sim y'$, a contradiction. 
Hence $k_1<k-1$. 
In particular, if $k=2$ then $k_1=0$, so we can now assume that $k\ge 3$ and complete the proof of the claim that $k_1=0$ by adapting an argument from \cite[Section~5]{Schneider:2008lr}. 
Without loss of generality, $y = \{ 1,\ldots,k_1,k+1,\ldots,2k-k_1 \}$. 
Let $z = \{ 1,\ldots,k_1,k+2,\ldots,2k-k_1+1 \}$. 
Then $y\not \sim z$ because $|y\cap z| = k-1>k_1$. 
On the other hand, $|x\cap z| = k_1$, and hence $x\sim z$ by (F1). 
If $k_1>0$ then the automorphism $(1\; k+2)(k-1\; k)$ fixes $y$ and $z$ but not $x$, so Lemma~\ref{diamondOriginal}(iv) implies that $y\sim z$, a contradiction. 
Therefore, $k_1=0$ as claimed. 
However, now if $2k+1<n$ then the automorphism $g=(1\; 2k+2)(2\; 3)$ fixes $y = \{k+1,\ldots,2k\}$ and $z = \{k+2,\ldots,2k+1\}$ but not $x$, a contradiction according to Lemma~\ref{diamondOriginal}(iv). 
Therefore, $n=2k+1$. 
However, now by maximality of $k_1=0$ and transitivity of $A_k$, there are precisely $k+1$ points collinear with $x$, and $G_x$ acts $2$-transitively on this set of $k+1$ points. 
This implies that there is either only one line incident with $x$, or $k+1$ such lines, each incident with only two points. 
Either situation contradicts the thickness of $\mathcal{S}$.

\subsubsection{Transitive but imprimitive point stabiliser} \label{ss:imprim}

In this case, $G_x$ is the stabiliser of a partition of $\{ 1,\ldots,n \}$ into $\ell$ blocks of size $k$, where $n=k\ell$. 
First suppose that $\ell=2$, and let $H$ denote the stabiliser of the point $n$ in the natural action of $G$ on $\{1,\ldots,n\}$. 
Since $G_x$ is transitive on $\{1,\ldots,n\}$, we have $G=HG_x$, and this in turn implies that $H$ is transitive on $\mathcal{P}$. 
Moreover, $H_x = H\cap G_x = (S_k\times S_{k-1})\cap G$, which is a maximal subgroup of $H$. 
Therefore, $H$ is primitive on $\mathcal{P}$. 
However, by Section~\ref{ss:intrans}, $\mathcal{S}$ cannot admit a point-primitive action of $H$ with stabiliser intransitive on $\{1,\ldots,n-1\}$. 
Therefore, $\ell\ge 3$.

Now, given points $x,y\in \mathcal{P}$, we use the obvious notation $|x\cap y|$ to mean the number of partition classes common to $x$ and $y$. 
We consider, in particular, points for which $|x\cap y| = \ell-2$, and for this case we define the following additional notation. 

\begin{definition} \label{l-2}
Let $x,y\in \mathcal{P}$ such that $|x\cap y| = \ell-2$. 
Then there are exactly two partition classes $B_1,B_2$ of $x$ that do not belong to $y$, and exactly two partition classes $B_1',B_2'$ of $y$ that do not belong to $x$. 
By appropriate labelling, we may assume that $|B_1\cap B_1'| = |B_2\cap B_2'| \ge \lceil k/2 \rceil$, and we then write $|x-y|_{\ell-2} := |B_1\cap B_1'| = |B_2\cap B_2'|$. 
\end{definition}

We first claim that there exist collinear points $x,y\in\mathcal{P}$ such that $|x\cap y| = \ell-2$ and $|x-y|_{\ell-2} = k-1$. 
The proof of this claim is via steps (i)--(iii) below, with steps (i) and (ii) adapted from Schneider and Van Maldeghem~\cite[p.~1447]{Schneider:2008lr} but repeated here to make it clear that flag-transitivity is not required. 

\begin{enumerate}[(i)]
\item {\em There exist collinear points $x,y\in\mathcal{P}$ with $|x\cap y|\ge 1$.} 
To prove this, first choose a point $x \in \mathcal{P}$ and suppose, without loss of generality, that $x$ is labelled by 
\[
x = \{ B_1,\ldots,B_\ell \}, \quad \text{where } B_i = \{(i-1)k+1,(i-1)k+2,\ldots,ik\} \text{ for } i=1,\ldots,\ell-1.
\]
Choose $y\in \mathcal{P}$ such that $x\sim y$. 
If $|x\cap y| \ge 1$ then we are done, so suppose that $|x\cap y|= 0$. 
First consider the case $k=2$. 
Then $\ell\ge 7$ because $n=k\ell\ge14$. 
Observe that the automorphism $g=(1\;2)(3\;4)$ fixes $x$ and fixes at least $\ell-\operatorname{supp}(g) = \ell-4 \ge 3$ partition classes of $y$ (here $\operatorname{supp}(g)$ is the {\em support} of $g$, namely the subset of $\{1,\ldots,n\}$ of elements moved by $g$).
In particular, $|y\cap yg| \ge 3$, and we show that $y\sim yg$. 
Let $\{i_1,i_2\}$, $\{i_3,i_4\}$ be two (partition) classes common to $y$ and $yg$. 
Since $\ell-\operatorname{supp}(g) \ge 3$, we may assume that $\{i_1,i_2,i_3,i_4\}$ is not a union of two classes of $x$. 
Then the automorphism $g' = (i_1\;i_2)(i_3\;i_4)$ fixes both $y$ and $yg$, but does not fix $x$ (because $i_1,i_2$ lie in the same class of $y$ and hence in different classes of $x$, and these two classes of $x$ are not fixed setwise by $g'$), so Lemma~\ref{diamondOriginal}(iv) implies that $y\sim yg$ as required. 
Now suppose that $k\ge 3$. 
The automorphism $h = (1\;2\;3)$ fixes $x$ and fixes at least $\ell-3$ classes of $y$. 
Hence, if $\ell\ge 4$, then $|y\cap yh| \ge 1$. 
If $\ell=3$ then $k\ge 5$ since $n\ge 14$, so, in particular, there is a class of $y$ that shares at least two elements with some class of $x$. 
Without loss of generality, we may assume that $1,2$ lie together in a class of $y$, and hence that $h$ fixes at least one class of $y$. 
Thus we have $|y\cap yh| \ge 1$ for all $\ell\ge 3$, and we check that $y\sim yh$. 
Let $\{i_1,\ldots,i_k\}$ be a class common to $y$ and $yh$, and suppose without loss of generality that $i_1,i_2,i_3$ do not all lie together in a class of $x$. 
Then the automorphism $(i_1\;i_2\;i_3)$ fixes $y$ and $yh$ but does not fix $x$, so Lemma~\ref{diamondOriginal}(iv) implies that $y\sim yh$. 

\item {\em There exist collinear points $x,y\in\mathcal{P}$ with $|x\cap y|=\ell-2$.} 
This follows immediately from part~(i) if $\ell=3$, so suppose that $\ell\ge 4$. 
Let $x,y \in \mathcal{P}$ be collinear points with $|x\cap y|\ge 1$, and label $x$ as in part~(i). 
If $|x\cap y| = \ell-2$ then we are done, so suppose the contrary. 
Suppose, without loss of generality, that $y$ contains the class $B_1 = \{1,\ldots,k\}$. 
Choose a class $B_i$ of $x$ that does not belong to $y$, and choose $j_1,j_2 \in B_i$ such that $j_1,j_2$ do not lie in the same class of $y$. 
Then the automorphism $(1\;2)(j_1\;j_2)$ fixes $x$ and maps $y$ to a point $z$ such that $|y\cap z| = \ell-2$, and we check that $y\sim z$. 
Since $\ell \ge 4$, $y$ and $z$ contain a common class $B'$ that is not a class of $x$ (because otherwise $|x\cap y|=\ell-2$, contrary to our assumption). 
Hence, taking $j_3,j_4 \in B'$ lying in different classes of $x$, the automorphism $(1\;2)(j_3\;j_4)$ fixes $y$ and $z$ but does not fix $x$, so Lemma~\ref{diamondOriginal}(iv) implies that $y\sim z$. 

\item {\em With notation as in Definition~$\ref{l-2}$, there exist collinear points $x,y\in\mathcal{P}$ with $|x\cap y| = \ell-2$ and $|x-y|_{\ell-2}=k-1$.} 
This follows immediately from part~(ii) if $k \in \{2,3\}$, so suppose that $k\ge 4$. 
Let $x,y\in\mathcal{P}$ be collinear points with $|x\cap y| = \ell-2$, and again label $x$ as in part~(i). 
Assume without loss of generality that $y$ also contains the classes $B_3,\ldots,B_\ell$. 
Let $B_1',B_2'$ denote the remaining two classes of $y$, and label these classes such that $k_1 := |B_1\cap B_1'| = |B_2\cap B_2'| \ge \lceil k/2 \rceil$. 
That is, $k_1 = |x-y|_{\ell-2}$. 
Consider first the case $k\ge 5$. 
If $k_1 = k-1$ then we are done, so suppose the contrary. 
Then $\lceil k/2 \rceil \le k_1 \le k-2$, and since $k\ge 5$ we have $k_1 \ge 3$. 
Without loss of generality, $B_1' = \{ 1,\ldots,k_1,k+1,\ldots,2k-k_1\}$ and $B_2' = \{ k_1+1,\ldots,k,2k-k_1+1,\ldots,2k\}$. 
Now consider the automorphism $g=(1\;2)(k_1\;k)$. 
Then $g$ fixes $x$, and the point $yg$ contains the classes $B_3,\ldots,B_\ell$ and its other two classes are $B_1''= \{ 1,\ldots,k_1-1,k,\ldots,2k-k_1 \}$ and $B_2''=\{ k_1,\ldots,k-1,2k-k_1+1,\ldots,2k\}$. 
In particular, $|y\cap yg| = \ell-2$ and $|y-yg|_{\ell-2} = |B_1''\cap B_1'| = |B_2''\cap B_2'| = k-1$. 
We now show that $y\sim yg$. 
Since $x\sim y$, we have $x=xg\sim yg$. 
Consider the automorphism $h =(1\;2)(k_1-1\;k+2)$. 
Since the points $1,2,k_1-1,k+2$ all lie in $B_1'\cap B_1''$, $h$ fixes both $y$ and $yg$ (note that this holds even if $k_1=3$). 
However, $h$ does not fix $x$, so Lemma~\ref{diamondOriginal}(iv) implies that $y\sim yg$. 
Finally, if $k=4$ then $2 = \lceil k/2 \rceil \le k_1 \le k-2 = 2$, so $k_1=2$ and without loss of generality we have $B_1' = \{1,2,5,6\}$ and $B_2' = \{3,4,7,8\}$. 
Then $|yg'-y|_{\ell-2}=3$, where $g' := (2\;4)(5\;6)$, and $h' := (1\;5)(3\;7)$ fixes $y$ and $yg'$ but not $x$, so Lemma~\ref{diamondOriginal}(iv) implies that $y\sim yg'$. 
\end{enumerate}

Let us now fix collinear points $x$ and $y$ such that $|x\cap y|=\ell-2$ and $|x-y|_{\ell-2}=k-1$, and set up some further notation. 
Specifically, we assume that $x,y\in \mathcal{P}$ are collinear points labelled by
\begin{align*}
x &= \{ B_1, B_2, B_3,\ldots,B_\ell \}, \\
y &= \{ \{1,\ldots,k-1,k+1\}, \{k,k+2,\ldots,2k\}, B_3,\ldots,B_\ell \},
\end{align*}
where $B_i = \{(i-1)k+1,(i-1)k+2,\ldots,ik\}$ for $i=1,\ldots,\ell-1$ as before. 
We let $\lambda$ denote the line incident with both $x$ and $y$. 
Further, we define an automorphism $h$ and a subset $\Delta$ of $\{1,\ldots,n\}$ as follows:
\[
h := \begin{cases}
(1\;2)(k\;k+1) & \text{if } k\ge 3 \\
(1\;3\;2) & \text{if } k=2
\end{cases}, \quad
\Delta := \begin{cases}
\{1,2,k,k+1\} & \text{if } k\ge 3 \\
\{1,2,3\} & \text{if } k=2.
\end{cases}
\]

First observe that $y=xh$. 
In particular, $x\sim xh$. 
If $k\ge 3$ then $h$ has order $2$, and hence $h$ fixes $\lambda$. 
If $k=2$ then $h$ has order $3$ and we have $xh\sim xh^2$ and hence $xh^2\sim xh^3 = x$, so again $h$ fixes $\lambda$ because $\mathcal{S}$ contains no triangles. 
In either case, we have $h \in G_\lambda \backslash G_x$. 
To complete the proof that $G_x$ cannot be transitive but imprimitive in its natural action on $\{1,\ldots,n\}$, we now obtain a contradiction by showing that $G_x = G_\lambda$. 
The cases (i) $k=2$ and (ii) $k\ge 3$ are treated separately. 

\begin{enumerate}[(i)]
\item Suppose that $k=2$, and recall that in this case $\ell\ge 7$, because $n\ge 14$. 
In particular, we do not need to consider the case $\ell=3$, for which the following argument does not work. 
We first claim that if $g\in G_x$ is an element satisfying $\Delta g \cap \Delta = \varnothing$ and $yg \neq yh$, then $g\in G_\lambda$. 
To prove this, first note that $h=(1\;3\;2)$ commutes with $h^g$ for such $g$, because $\Delta g \cap \Delta = \varnothing$. 
Note also that $h^gh$ does not fix $x$: since $g\in G_x$, we have $xh^gh = xhgh = ygh$, so $h^gh$ fixes $x$ if and only if $yg$ equals $xh^2$, which equals $yh$, but $yg \neq yh$. 
Therefore, and since $x = xg \sim xhg = xgg^{-1}hg = xh^g$, Lemma~\ref{diamondOriginal}(ii) implies that $x,xh,xh^g$ all lie on a common line. 
That is, $x=xg$ and $xh^g=xhg$ both lie on $\lambda$, so $g$ fixes $\lambda$. 
The claim is proved. 
Now consider the elements $g_1,g_2,g_3 \in G_x$ given by
\begin{align*}
g_1 &:= (1\;5)(2\;6)(3\;4)(7\;8), \\
g_2 &:= (1\;7)(2\;8)(3\;4)(5\;6), \\
g_3 &:= (1\;5)(2\;6)(3\;7)(4\;8).
\end{align*}
Then $\Delta g_i \cap \Delta = \varnothing$ and $yg_i \neq yh$ for $i=1,2$ and $3$. 
(To check that $yg_i \neq yh$, observe that each $yg_i$ contains the partition class $B_1=\{1,2\}$ but that $yh$ does not contain $B_1$.) 
Hence each $g_i$ lies in $G_\lambda$ by the claim. 
Now consider the setwise stabiliser $(G_x)_{B_1}$ of $B_1$ in $G_x$. 
Then, in particular, $g_1$ does not lie in $(G_x)_{B_1}$, because $g_1$ maps $B_1$ to $B_3$. 
Since $(G_x)_{B_1}$ is a maximal subgroup of $G_x$, this implies that $G_x = \langle (G_x)_{B_1},g_1 \rangle$. 
We show that $(G_x)_{B_1}$ is contained in $G_\lambda$, which implies that $G_x$ is contained in $G_\lambda$, and hence, by maximality of $G_x$ in $G$, that $G_x = G_\lambda$. 
Let $a \in (G_x)_{B_1}$. 
Since all of the $yg_i$ contain $B_1$, it follows that all of the $y(g_ia)$ contain $B_1$, and hence that none of these elements is equal to $yh$. 
We now show that for $i$ equal to one of $1$, $2$, or $3$, we have $\Delta (g_ia) \cap \Delta = \varnothing$. 
By the claim, this implies that $g_ia \in G_\lambda$, and it follows that $a = g_i^{-1}(g_i a) \in G_\lambda$ as required. 
Since $a$ fixes $B_1=\{1,2\}$, we just need to choose $i$ such that $a$ does not map any of the elements of $\Delta g_i$ to the element $3$. 
If $3 \not \in \{4a,5a,6a\}$ then $\Delta(g_1a) = \{1,2,3\}(g_1a) = \{4a,5a,6a\}$ and hence $\Delta(g_1a)\cap \Delta = \varnothing$. 
If $3 \in \{5a,6a\}$ then $\Delta(g_2a) = \{4a,7a,8a\}$ and hence $\Delta(g_2a)\cap \Delta = \varnothing$. 
Finally, if $3=4a$ then $\Delta(g_3a) = \{5a,6a,7a\}$ and hence $\Delta(g_3a)\cap \Delta = \varnothing$. 

\item Now suppose that $k\ge 3$. 
We first claim that if $g\in G_x$ is an element satisfying $\Delta g \cap \Delta = \varnothing$, then $g\in G_\lambda$. 
To prove this, begin by observing that $h=(1\;2)(k\;k+1)$ commutes with $h^g$ for such $g$, and that $x = xg \sim xhg = xgg^{-1}hg = xh^g$. 
If $hh^g=h^gh$ does not fix $x$, then Lemma~\ref{diamondOriginal}(ii) implies that $x,xh,xh^g$ all lie on a common line. 
That is, both $x=xg$ and $xh^g=xhg$ lie on $\lambda$, and so $g$ fixes $\lambda$. 
If $hh^g=h^gh$ does fix $x$ then we have $x = xh^gh = (xg^{-1})hgh = (xh)gh = ygh$, and hence $y = xh = ygh^2 = yg$ (because $h^2=1$), so in this case $g$ fixes both $x$ and $y$, and hence also fixes $\lambda$. 
The claim is proved. 
Now consider the elements $g_1,g_2 \in G_x$ given by
\begin{align*}
g_1 &:= \begin{cases}
(1\; 2k+1)(2\; 2k+2)\cdots(k\;3k)(k+1\;k+2) & \text{if $k$ is odd} \\
(1\; 2k+1)(2\; 2k+2)\cdots(k\;3k)(k+1\;k+2)(k+3\;k+4) & \text{if $k$ is even}, 
\end{cases} \\
g_2 &:= \begin{cases}
(1\; 2k+1)(2\; 2k+2)\cdots(k\;3k)(k+1\;k+3) & \text{if $k$ is odd} \\
(1\; 2k+1)(2\; 2k+2)\cdots(k\;3k)(k+1\;k+3)(k+2\;k+4) & \text{if $k$ is even}.
\end{cases}
\end{align*}
Then $\Delta g_1 \cap \Delta = \Delta g_2 \cap \Delta = \varnothing$, and hence $g_1,g_2 \in G_\lambda$ by the claim. 
Consider the setwise stabiliser $(G_x)_{B_3}$ of $B_3$ in $G_x$. 
Since $g_1 \not \in (G_x)_{B_3}$, and since $(G_x)_{B_3}$ is a maximal subgroup of $G_x$, we have $\langle (G_x)_{B_3},g_1 \rangle = G_x$. 
We now show that $(G_x)_{B_3} \le G_\lambda$, which implies that $G_x \le G_\lambda$, and hence, by maximality of $G_x$ in $G$, that $G_x = G_\lambda$. 
Let $a \in (G_x)_{B_3}$. 
We show that for one of $i=1$ or $i=2$ we have $\Delta (g_ia) \cap \Delta = \varnothing$. 
By the claim, this implies that $g_ia \in G_\lambda$, and hence that $a = g_i^{-1} (g_ia) \in G_\lambda$, as required. 
We have $\Delta g_1 = \{2k+1,2k+2,3k,k+2\}$ and $\Delta g_2 = \{2k+1,2k+2,3k,k+3\}$. 
Since $a$ fixes $B_3$ setwise, we have $\Delta (g_1 a) \cap \Delta = \varnothing$ unless $(k+2)a = k+1$, and in this case we have instead $\Delta (g_2 a) \cap \Delta = \varnothing$. 
\end{enumerate}

\subsubsection{Primitive point stabiliser} \label{ss:prim}

Now suppose that the stabiliser $G_x$ of a point $x\in \mathcal{P}$ is a primitive subgroup of $S_n$ in its action on $\{1,\ldots,n\}$. 
We need the following lemma about the index of a primitive permutation group. 
The result is likely to be well known, but we have been unable to find a reference for it, so we include a proof.

\begin{lemma} \label{GxPrim}
Let $G=A_n$ or $S_n$, where $n \ge 9$. 
If $H$ is a primitive maximal subgroup of $G$ that does not contain $A_n$, then $|G:H|$ is divisible by $4$. 
\end{lemma}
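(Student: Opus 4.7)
The plan is to establish the claim in two steps: first, that $|G:H|$ is even, and then that it is not congruent to $2$ modulo $4$. For step one, a standard coset-action argument suffices. A Sylow $2$-subgroup $P$ of $G$ acts on the set $G/H$ of cosets with orbits of $2$-power size; if $|G:H|$ were odd, every such orbit would be a singleton, forcing $P \le \bigcap_{g\in G} gHg^{-1} = \mathrm{core}_G(H)$. This core is a normal subgroup of $G$ properly contained in $H$, hence trivial, as the only nontrivial normal subgroups of $G = A_n$ or $S_n$ (for $n\ge 5$) contain $A_n$, and $H \not\supseteq A_n$ by hypothesis. But $|P|>1$ since $n\ge 9$, a contradiction.

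For step two, I would apply the O'Nan--Scott theorem to the primitive maximal subgroup $H$. Since $H \le S_n$ is primitive on $\{1,\ldots,n\}$ and does not contain $A_n$, $H$ is of affine type, of product-action type, of simple or compound diagonal type, of twisted wreath type, or almost simple with socle $T\ne A_n$. The goal is to show $v_2(|H|) \le v_2(|G|) - 2$, equivalently, using Legendre's formula $v_2(m!) = m - s_2(m)$, to show $v_2(|H|) \le n - s_2(n) - 2 - \epsilon$, where $\epsilon=0$ for $G=S_n$ and $\epsilon=1$ for $G=A_n$.

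In the affine case with $n=p^d$, $v_2(|H|)\le d + v_2(|\mathrm{GL}(d,p)|)$, which for $p=2$ is $d+\binom{d}{2}$ and for odd $p$ is bounded by $d\,v_2(p^2-1)$; either way this quantity is dominated by $v_2((p^d)!) \sim p^d$ once $n=p^d\ge 9$. In the product-action case $|H|$ divides $|H_0|^r \cdot r!$ with $n = |\Delta|^r$, and a direct comparison using Legendre's formula again yields the bound. The diagonal and twisted wreath types have $n\ge |T|^{k-1}\ge 60$, so $v_2(n!)$ overwhelms $v_2(|T|^k \cdot k! \cdot |\mathrm{Out}(T)|)$ comfortably. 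Any residual small-$n$ boundary cases (e.g.\ $n\in\{9,16\}$) can be checked by hand.

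The main obstacle is the almost simple case: one must rule out, for every nonabelian finite simple $T\ne A_n$ and every faithful primitive action of $T$ on $n\ge 9$ points, that $v_2(|T|) \ge v_2(n!)-1$. Here I would invoke a CFSG-based order bound such as Mar\'oti's $|H|\le n^{1+\lceil\log_2 n\rceil}$ (or the weaker Praeger--Saxl bound $|H|<4^n$) to force $v_2(|H|) \le (\log_2 n)(1+\log_2 n)$, which is strictly less than $n - s_2(n)-2$ for $n\ge 9$. The exceptional primitive groups (e.g.\ the Mathieu groups and the low-rank classical groups $\mathrm{PSL}(d,q)$ acting on points or hyperplanes of $\mathrm{PG}(d-1,q)$) that slip past generic order bounds would then have to be handled individually via the classification of maximal subgroups of $A_n$ and $S_n$, so this is where the bulk of the case work lies.
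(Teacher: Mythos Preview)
Your route is workable in principle but far heavier than the paper's, and it has a quantitative miscalculation that substantially enlarges the residual case work. You assert that the Mar\'oti-based bound $v_2(|H|)\le \log_2|H|\le (\log_2 n)(1+\log_2 n)$ falls below $n-s_2(n)-2$ for all $n\ge 9$ apart from a couple of stragglers like $n\in\{9,16\}$. In fact the inequality fails throughout the range $9\le n\lesssim 35$: for instance at $n=32$ the left side is $30$ while the right side is $29$. So your ``generic'' bound only starts biting around $n\approx 36$, and you would need to enumerate and inspect the primitive maximal subgroups of $A_n$ and $S_n$ for all $9\le n\le 35$ or so---dozens of groups rather than a handful. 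None of this is fatal, but it is a great deal of CFSG-dependent bookkeeping, and your proposal gives no indication of how it would be organised.

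The paper's proof avoids all of this with a single classical fact: Jordan's theorem that a primitive subgroup of $S_n$ containing a double transposition must contain $A_n$ once $n\ge 9$. One compares a Sylow $2$-subgroup $Q$ of $H$ with a Sylow $2$-subgroup $P$ of $S_n$ and reduces to showing $|P\cap A_n : Q\cap A_n|\ge 4$. If this index were $1$ or $2$, an elementary pigeonhole using the subgroup $K_0=\langle(1\,2)(3\,4),(3\,4)(5\,6)\rangle\le P\cap A_n$ forces $Q\cap A_n$ to contain a double transposition, whence $H\supseteq A_n$ by Jordan---contradiction. This is a few lines, entirely CFSG-free, and handles both the ``divisible by $2$'' and ``divisible by $4$'' steps at once. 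Given that the paper emphasises avoiding CFSG in its main argument, this elementary route is clearly preferable to an O'Nan--Scott case split backed by Mar\'oti's bound.
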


\begin{proof}
Let $Q$ and $P$ be Sylow $2$-subgroups of $H$ and $S_n$, respectively, such that $Q \le P$. 
Write $Q^+ = Q\cap A_n$ and $P^+ = P\cap A_n$. 
Then $P^+$ is a Sylow $2$-subgroup of $A_n$, and $|P:P^+|=2$. 
If $G=S_n$ then $H$ is not contained in $A_n$, by maximality of $H$, so $|Q:Q^+|=2$ and hence $|G:H|_2 = |P:Q| = |P^+:Q^+|$, where $|G:H|_2$ is the largest power of $2$ dividing $|G:H|$. 
If $G=A_n$ then $P=P^+$ and $Q=Q^+$, and again $|G:H|_2 = |P:Q| = |P^+:Q^+|$.
We now show that the $2$-power $|P^+:Q^+|$ is at least $4$. 
If $|P^+:Q^+|=1$ then $Q^+=P^+$, so in particular $Q^+$ is a Sylow $2$-subgroup of $A_n$ and hence contains a double transposition (a product of two disjoint transpositions). 
However, this means that the primitive group $H$ contains a double transposition, and since $n\ge 9$, a theorem of Jordan~\cite{Jordan:1875lo} (see also \cite[Example~3.3.1]{Dixon:1996ud}) then implies that $H$ contains $A_n$, a contradiction. 
Now suppose that $|P^+:Q^+|=2$. 
Let $K=\langle (1\;2),(3\;4),(5\;6) \rangle$. 
By conjugating $P$ and  $H$ simultaneously (and conjugating $Q$ along with $H$), we may assume that $K\le P$. 
Let $K_0=K \cap A_n$. 
Then $|K|=8$, $|K_0|=4$, $K_0\le P^+$, and $K_0$ contains three double transpositions. 
If $K_0\le Q^+$ then $Q^+$ contains a double transposition, and we are done. 
Otherwise, $P^+=Q^+K_0$, and so $2=|P^+:Q^+|=|Q^+K_0:Q^+|=|K_0:Q^+\cap K_0|$. 
Thus $Q^+ \cap K_0$ is nontrivial, so $Q^+$ contains a double transposition, and hence so does $H$, a contradiction.
\end{proof}

We have $|\mathcal{P}| = |G:G_x|$ with $G_x$ a primitive maximal subgroup of $G$, and $G_x$ does not contain $A_n$ because this would imply that $|\mathcal{P}| = 1$ or $2$. 
Therefore, and since we are assuming that $n\ge 14 > 9$, Lemma~\ref{GxPrim} implies that $|\mathcal{P}|$ is divisible by $4$, and Lemma~\ref{stNotCoprime} together with properties (PH) and (PO) in Section~\ref{sec2} therefore implies that $\gcd(s,t)>1$. 
We note also that, since $G_x$ is a maximal subgroup of $A_n$ or $S_n$ and $n\ge 14$, $G_x$ is not cyclic of prime order and, in particular, the stabiliser $(G_x)_j$ of an element $j \in \{1,\ldots,n\}$ is nontrivial. 
Moreover, we note that by a theorem of Jordan~\cite{Jordan:1872kk,Manning:1909dw}, the primitive group $G_x$ contains no elements with cycle type $3^1$ or $3^2$ if $n\ge 10$ (and hence, in particular, for $n\ge 14$). 

Now consider the $3$-cycle $h = (1\; 2\; 3) \in G$. 
Then, as noted above, $h$ is fixed-point free in its action on $\mathcal{P}$, so Lemma~\ref{diamondOriginal}(i) implies that there exists $x \in \mathcal{P}$ such that $x \sim xh$. 
This implies that $xh \sim xh^2$, and hence that $xh^2 \sim xh^3 = x$. 
Since $\mathcal{S}$ contains no triangles, it follows that $h$ fixes the line $\lambda = \langle x,xh \rangle$. 
Write $\Delta = \{ 1,2,3 \}$. 
We first claim that if $g \in G_x$ satisfies $\Delta g \cap \Delta = \varnothing$, then $g \in G_\lambda$. 
To prove this, first note that $h$ and $h^g$ commute. 
Since $x \sim xh$, we have $x = xg \sim xhg = xgg^{-1}hg = xh^g$; that is, $x \sim xh^g$. 
Moreover, $hh^g$ is a permutation of type $3^2$ and is therefore fixed-point free on $\mathcal{P}$, as noted above. 
In particular, $hh^g$ does not fix $x$, so Lemma~\ref{diamondOriginal}(ii) implies that $x,xh,xh^g$ lie on a common line. 
That is, $x=xg$ and $xh^g = xhg$ both lie on $\lambda = \langle x,xh \rangle$, and hence $g$ fixes $\lambda$ as claimed. 
Let us assume at this point that $n>15$. 
We show that $(G_x)_1 \le G_\lambda$. 
Let $a \in (G_x)_1$ and write $\Gamma = \Delta \cup \{ 2a^{-1},3a^{-1} \}$. 
Suppose that $b \in G_x$ satisfies $\Delta b \cap \Gamma = \varnothing$. 
Then, in particular, $\Delta b \cap \Delta = \varnothing$, and moreover, $\Delta (ba) \cap \Delta = (\Delta b \cap \Delta a^{-1})a \subseteq (\Delta b \cap \Gamma)a = \varnothing$. 
Therefore, by the above argument, $b$ and $ba$ both lie in $G_\lambda$, and hence $a = b^{-1}(ba) \in G_\lambda$. 
By Neumann's Separation Lemma~\cite[Theorem~2]{Birch:1976ad}, such an element $b \in G_x$ exists if $n > |\Delta| |\Gamma| = 15$, and so $(G_x)_1 \le G_\lambda$ as claimed. 
By an analogous argument, $(G_x)_2 \le G_\lambda$, so $G_x = \langle (G_x)_1,(G_x)_2 \rangle \le G_\lambda$ (because $(G_x)_1$ is nontrivial and is a maximal subgroup of $G_x$) and hence $G_x = G_\lambda$ (because $G_x$ is a maximal subgroup of $G$). 
However, $h$ fixes $\lambda$ but not $x$, so this is a contradiction. 

It remains to consider the cases $n=14$ and $n=15$. 
We require a primitive maximal subgroup $H$ of $G=A_n$ or $S_n$ such that $H$ does not contain $A_n$, and such that $|G:H|$ is equal to the number of points of a (finite thick) generalised hexagon or octagon. 
For $n = 14$, we have two candidates: $\textsf{PSL}(2,13)$ and $\textsf{PGL}(2,13)$. 
The first is maximal in $A_{14}$, and the second is maximal in $S_{14}$. 
We have $| A_{14} : \textsf{PSL}(2,13) | =  | S_{14} : \textsf{PGL}(2,13) | = 39\;916\;800$, and one checks computationally that this cannot be the number of points of a generalised hexagon or octagon (using properties (PH) and (PO) in Section~\ref{sec2}). 
For $n = 15$, we have four candidates: $A_7$, $A_6$, $S_6$, and $\textsf{PSL}(4,2)$. 
All are contained in $A_{15}$, and only $\textsf{PSL}(4,2)$ is maximal in $A_{15}$ (it contains the other three). 
We have $| A_{15} : \textsf{PSL(4,2)} | = 32\;432\;400$, which also cannot be the number of points of a generalised hexagon or octagon.

\subsection{Proof of Theorem~\ref{thm:main} and Corollary~\ref{mainCorollary}} \label{s:summary}

Let us summarise the proof of Theorem~\ref{thm:main} and deduce Corollary~\ref{mainCorollary}. 
Theorem~\ref{thm:main} asserts that if Hypothesis~\ref{hyp} holds, then $G$ must be an almost simple group of Lie type. 
Schneider and Van Maldgehem~\cite[Lemma~4.2(i)]{Schneider:2008lr} already proved that $G$ cannot have O'Nan--Scott type HA, HS, or HC under Hypothesis~\ref{hyp}, and Lemmas~\ref{lemma:PACD} and~\ref{lemma:SDTW} further imply that $G$ cannot have type PA, CD, SD, or TW. 
Therefore, $G$ must be an almost simple group. 
By a result of Buekenhout and Van Maldeghem~\cite{Buekenhout:1993bf}, the socle of $G$ cannot be a sporadic group, and by Lemma~\ref{lemma:An}, it cannot be an alternating group. 
Therefore, the only remaining possibility is that $\operatorname{soc}(G)$ is a simple group of Lie type, namely that $G$ is an almost simple group of Lie type. 

Now, the conclusion of Theorem~\ref{thm:main} holds in particular if $G$ also acts primitively on lines, so case~(i) of Corollary~\ref{mainCorollary} is the only possibility for $d\in \{6,8\}$. 
If $d=3$ then, by the work of Gill~\cite{Gill:2007xw}, the only other possibility is case~(ii). 
Finally, suppose that $d=4$. 
Bamberg et~al.~\cite{Bamberg:2012yf} proved, in the first instance, that $G$ must be an almost simple group with $\operatorname{soc}(G)=A_n$, $n\ge 5$, if it acts primitively on both the points and lines of $\mathcal{S}$. 
They then showed, using only the assumption of point-primitivity, that the stabiliser $G_x$ of a point of $\mathcal{S}$ must be a primitive subgroup of $S_n$ in the natural action on $\{1,\ldots,n\}$ (see their Sections~5.1 and~5.2). 
By duality, the same is true for lines under the assumption of line-primitivity. 
In particular, the automorphism $h=(1\;2\;3)$ cannot fix any point or line of $\mathcal{S}$, because this would force the stabiliser to contain $A_n$ and hence imply that $\mathcal{S}$ has either at most two points or at most two lines. 
However, if $\gcd(s,t)>1$ then this contradicts \cite[Lemma~3.4]{Bamberg:2012yf}, which says that an automorphism of order $2$ or $3$ must fix either a point or a line. 
Therefore, the only remaining possibility is that $\gcd(s,t)=1$, as in~(iii).

\bibliographystyle{abbrv}
\bibliography{HexOct13}

\end{document}